\PassOptionsToPackage{dvipsnames}{xcolor}
\documentclass[leqno]{siamart1116}

\usepackage{adjustbox}
\usepackage{algpseudocode}
\usepackage{amssymb}
\usepackage{bm,bbm}
\usepackage{booktabs}
\usepackage{caption}
\usepackage{color}
\usepackage{derivative}
\usepackage{dsfont}
\usepackage{dutchcal}
\usepackage{enumerate}
\usepackage{enumitem}
\usepackage{geometry}
\usepackage{graphicx}
\usepackage{mydef}
\usepackage{marginnote}
\usepackage{multirow}
\usepackage[normalem]{ulem}
\usepackage[numbers]{natbib}
\usepackage{siunitx}
\usepackage{subcaption}
\usepackage{xspace}

\let\cite=\citet
\setlist{leftmargin=20pt}

\newcommand{\skipremainderofdocument}[1]{}


\begin{document}

\newcommand{\TheTitle}{%
Robust second-order approximation of the compressible Euler equations
  with an arbitrary equation of state}
\newcommand{\TheAuthors}{B. Clayton, J.-L. Guermond, M. Maier, B. Popov, E. Tovar}

\renewcommand{\thefootnote}{\arabic{footnote}}

\headers{Second-order Finite Element approximation}{\TheAuthors} 

\title{{\TheTitle}%
  \thanks{Draft version, \today%
  \funding{This material is based upon work supported in part by the
    National Science Foundation grants DMS-1912847 (MM), DMS-2045636 (MM),
    DMS-2110868 (JLG, BP), by the Air Force Office of Scientific Research,
    USAF, under grant/contract number FA9550-18-1-0397 (JLG, BP), the Army
    Research Office, under grant number W911NF-19-1-0431 (JLG, BP), and the
    U.S. Department of Energy by Lawrence Livermore National Laboratory
    under Contracts B640889, B641173 (JLG, BP).
    ET acknowledges the support from Los Alamos National Laboratory's
    (LANL) Advanced Simulation and Computing Program, Integrated Codes (IC) and Physics \& Engineering
    Models (PEM) sub-programs, operated by Triad National Security, LLC, for
    the National Nuclear Security Administration of U.S. Department of
    Energy (Contract No. 89233218CNA000001). ET also acknowledges support
    from the U.S. Department of Energy's Office of Applied Scientific
    Computing Research (ASCR) and Center for Nonlinear Studies (CNLS) at
    LANL under the Mark Kac Postdoctoral Fellowship in Applied Mathematics.
    The authors acknowledge the Texas Advanced Computing Center (TACC) at
    The University of Texas at Austin for providing HPC resources that have
    contributed to the research results reported within this paper.
    \url{http://www.tacc.utexas.edu}}}}

\author{Bennett~Clayton\footnotemark[2]
  \and Jean-Luc~Guermond\footnotemark[2]
  \and Matthias~Maier\footnotemark[2]
  \and Bojan~Popov\footnotemark[2]
  \and Eric~J.~Tovar\footnotemark[3]}

\maketitle

\renewcommand{\thefootnote}{\fnsymbol{footnote}}

\footnotetext[2]{%
  Department of Mathematics, Texas A\&M University, 3368 TAMU,
  College Station, TX 77843, USA.}
\footnotetext[3]{%
  X Computational Physics, Los Alamos National Laboratory, P.O. Box 1663,
  Los Alamos, NM, 87545, USA.}

\renewcommand{\thefootnote}{\arabic{footnote}}

\begin{abstract}
  This paper is concerned with the approximation of the compressible Euler
  equations supplemented with an arbitrary or tabulated equation of state.
  The proposed approximation technique is robust, formally second-order
  accurate in space, invariant-domain preserving, and works for every
  equation of state, tabulated or analytic, provided the pressure is
  nonnegative. An entropy surrogate functional that grows across shocks is
  proposed. The numerical method is verified with novel analytical
  solutions and then validated with several computational benchmarks seen in the literature.
\end{abstract}

\begin{keywords}
  Euler equations, gas dynamics, equation of state, tabulated
  equation of state, second-order accuracy, finite element approximation,
  invariant-domain preserving, graph viscosity, convex limiting.
\end{keywords}

\begin{AMS}
  65M60, 65M12, 65M15, 35L45, 35L65
\end{AMS}

\pagestyle{myheadings}
\thispagestyle{plain}
\markboth%
  {B. Clayton, J.-L. Guermond, M. Maier, B. Popov, E. Tovar}%
  {Robust approximation of Euler Equations with arbitrary equation of
  state}


\section{Introduction}
The objective of this paper is to continue the work started in
\cite{CGP_2022} where we introduced an explicit and invariant-domain
preserving method to approximate the Euler equations equipped with an
equation of state that is either tabulated or is given by an expression
that is so involved that elementary Riemann problems cannot be solved
exactly or efficiently. The method proposed in \citep{CGP_2022}  is
invariant-domain preserving in the sense that it guarantees that the
density and the internal energy of the approximation are positive. A key
ingredient of the method is a local approximation of the equation of state
using a covolume ansatz (\aka Noble-Abel equation of state) from which
upper bounds on the maximum wave speed are derived for every elementary
Riemann problem. The downside is that the method is first-order accurate in
space. In the present paper we propose a technique that increases the
accuracy in space to second-order and preserves the invariant-domain
properties.  We also introduce a functional that can be used as an entropy
surrogate. The functional in question is shown to be increasing across
shocks in the Riemann problem involved in the construction of the local
maximum wave speed. We show how to use this functional to limit the
internal energy from below. This feature is useful for equations of state
that are tabulated or interpolated from experimental data since in this
case no natural notion of entropy is available.

It is sometimes possible, though possibly expensive, to solve Riemann
problems when the equation of state is analytic.  For instance this is done
in \cite[\S1]{Colella_Glaz_JCP_1985}, \cite{Irving_Causon_Toro_IJNMF_1998},
\cite{Quartapelle_Castelletti_Gardone_Quaranta_2003}. This cannot be done
with tabulated equations of state because the information on the pressure
is incomplete. Several attempts to develop methods working with an
arbitrary or tabulated equation of state have been reported in the
literature. One way to do so consists of using approximate Riemann solvers
like those found in \cite{Dukowicz_JCP_1985},
\citep[\S2]{Colella_Glaz_JCP_1985}, \cite{Roe_Pike_1985},
\cite{Pikes_AIAA_1993}, and \cite{LEE2013165}. One can also simplify the
Riemann problem by using flux splitting techniques as those reported in
\cite{Toro_Castro_Lee_JCP_2015}.  We also refer to \cite{SAUREL2007822},
\cite{Banks2010}, \cite{Dumbser_casulli_2016}, \cite{DUMBSER2013141} where
approximation techniques are developed using approximate Riemann solvers
for various equations of state. Some of these techniques guarantee
positivity of the density, but little else is guaranteed in general. The
method introduced in \cite{CGP_2022} is based on a graph viscosity
technique using upper bounds on the maximum wave speed in the Riemann
problem. Instead of using the two-shock approximation of the Riemann
solution, as done in most methods based on approximate solvers, the method
proposed in \citep{CGP_2022} approximates the pressure in the Riemann fan
by the covolume equation of state and subsequently estimates guaranteed
upper bounds on the maximum wave speed. This in turn ensures that the
internal energy in the proposed algorithm is positive in addition to the
density being positive. If it happens that the equation of state is of
covolume type, then the method also preserves the minimum principle on the
specific entropy.  A technique based on similar principles is reported in
\cite{wang2021stiffened} where the authors use a stiffened gas equation of
state to approximate the pressure in the Riemann fan. To the best of our
knowledge, the method proposed in the present paper is among the very first
ones that are provably invariant-domain preserving for complex or tabulated
equations of state and high-order accurate in space.

The paper is organized as follows. In Section~\ref{Sec:Preliminaries}, we
introduce the mathematical model of interest and the corresponding
notation. We also briefly discuss the assumptions we make on the equation
of state and recall results from~\citep{CGP_2022} that are needed for this
work (\ie we recall the first-order approximation of the Euler equations in
\S\ref{Sec:first_order}).  In Section~\ref{Sec:high_order}, we construct a
provisional update that is higher-order accurate in space. This update is
based on a high-order graph viscosity using an entropy commutator and an
activation function. Then, in Section~\ref{Sec:limiting}, we apply a novel
convex limiting technique that corrects the invariant-domain violations of
the provisional higher-order method. The final result is an approximation
technique that is robust, formally second-order accurate in space,
provably invariant-domain preserving, and works for every equation of state
(tabulated or analytic) that satisfies the mild assumptions stated in
\S\ref{SubSec:EOS}. Finally in Section~\ref{Sec:illustrations}, the method
is verified with analytical solutions and published benchmarks and is
validated with experiments. A short conclusion is given in
\S\ref{sec:conclusion}.


\section{Preliminaries}
\label{Sec:Preliminaries}
We formulate the problem and introduce notation in this section. We also
recall essential results from \citep{CGP_2022} for completeness.

\subsection{The Euler equations}
Let $\Dom$ be a bounded polyhedron in $\Real^d$.  Given some initial data
$\bu_0(\bx)\eqq (\rho_0,\bbm_0,E_0)(\bx)$ and initial time $t_0$, we look
for $\bu(\bx,t) := (\rho,\bbm,E)(\bx,t)$ solving the compressible Euler
equations in the weak sense:
\begin{subequations}\label{Euler}
\begin{align}
  & \partial_t \rho + \DIV(\bv \rho) =  0
  & & \text{\ae $t>t_0$, $\bx \in \Dom$},   \label{mass}
  \\
  & \partial_t \bbm + \DIV\big(\bv\otimes \bbm + p(\bu) \polI_d\big) = \bzero
  &  & \text{\ae $t>t_0$, $\bx \in \Dom$},\label{momentum}
  \\
  & \partial_t E  + \DIV\big(\bv(E + p(\bu))\big) = 0
  & & \text{\ae $t>t_0$, $\bx \in \Dom$}. \label{total_energy}
\end{align}%
\end{subequations}
The components of the dependent variable $\bu\eqq (\rho, \bbm, E)\tr \in
\Real^{d+2}$ (considered to be a column vector) are the density, $\rho$,
the momentum, $\bbm$, and the total mechanical energy, $E$. We also
introduce the velocity $\bv(\bu)\eqq \rho^{-1} \bbm$ and the specific internal
energy, $e(\bu) \eqq \rho^{-1}E - \frac12\|\bv(\bu)\|_{\ell^2}^2$. To
simplify the notation later on we introduce the flux $\polf(\bu) := (\bbm,
\bv(\bu)\otimes \bbm + p(\bu)\polI_d, \bv(\bu)(E+p))\tr \in \Real^{(d+2)\times d}$,
where $\polI_d$ is the $d\CROSS d$ identity matrix.

\subsection{Equation of state}\label{SubSec:EOS}
In \eqref{Euler} the function $p(\bu)$ denotes the pressure which we assume
to be given by an \emph{oracle}. This means we assume to have no a priori
knowledge of the function $p(\bu)$ itself apart from some mild structural
assumptions stated below. We only assume that for a given state $\bu$ we
are able to retrieve a pressure $p(\bu)$ in a suitable way, for example by
evaluating an arbitrary analytic expression, or by deriving a value from
tabulated experimental data. More precisely, in the numerical illustrations
reported in \S\ref{Sec:illustrations}, we consider oracles given by
analytic functions for the ideal gas, van der Waals, Jones--Wilkins--Lee
and Mie-Gruneisen equations of state. We also use the~\texttt{SESAME}
database developed at Los Alamos National Laboratory~\citep{lyon1992sesame}
to test the method with experimental tabulated data.

Throughout the paper we assume that the domain of definition  of the oracle
$p(\bu)$ is the set $\calB(b)\subset\Real^{d+2}$ given by
\begin{equation}\label{def_of_calA}
  \calB(b)\eqq \big\{\bu \eqq(\rho,\bbm, E) \in \mathbb{R}^{d+2}
  \st 0<\rho, \ 0 < 1 - b \rho, \; 0< e(\bu) \big\}.
\end{equation}
We henceforth refer to $\calB(b)$ as the admissible set.  One of the
objectives of the paper
is to guarantee that
the approximation is high-order
accurate and leaves $\calB(b)$ invariant.
The inequality $\rho <
\tfrac1b$ found in the definition of $\calB(b)$ is the so called
\emph{maximum compressibility} condition. The constant $b$ can be set to
zero if the user has no a priori knowledge about the maximum compressibility
of the fluid under consideration. We recall, however, that a large class of
analytic equations of state, such as the Noble-Abel, the Mie-Gruneisen
(with the Hugoniot locus as the reference curve), and the
Noble-Abel-Stiffened-Gas equations of state all involve a maximum
compressibility constant. Finally, we assume that the oracle $p(\bu)$
returns a non-negative pressure,
\begin{equation}
  p: \calB(b) \to \Real_{\ge 0}. \label{Pressure_is_nonnegative}
\end{equation}
The assumption can be weakened, but for the sake of
simplicity, we refrain from doing so in this paper. We
leave this extension for future works.

\subsection{First-order time and space approximation}\label{Sec:first_order}
\label{Sec:low_order_scheme}
The time and space discretization proposed in~\cite{CGP_2022} is based
on~\citep{Guermond_Popov_SINUM_2016}. This method is in some sense a
discretization-agnostic generalization of an algorithm introduced
by~\cite[p.~163]{Lax_1954}. We denote by $t^n$ the current time, $n\in
\polN$, and we denote by $\dt$ the current time step size; that is
$t^{n+1}:=t^n+\dt$. Without going into the details of the space
approximation, we assume that the current approximation of $\bu(\cdot,t^n)$
is a collection of states $\{\bsfU_i\upn\}_{i\in\calV}$, where the index
set $\calV$ is used to enumerate all the degrees of freedom of the
approximation, and $\bsfU_i^n$ is in $\Real^{d+2}$ for all $i\in\calV$. The
update at $t^{n+1}$ is obtained as follows:
\begin{equation}
  \label{low_order_scheme}
  \frac{m_i}{\dt}(\bsfU_i\upLnp-\bsfU_i^n) +\!\!\sum_{j\in \calI(i)}
  \polf(\bsfU_j^n)\,\bc_{ij}
  - \!\!\sum_{j\in \calI(i){\setminus}\{i\}} d_{ij}\upLn (\bsfU^n_j -
  \bsfU^n_i)  = \bzero.
\end{equation}
The quantity $m_i$ is called the lumped mass and we assume that $m_i>0$ for
all $i\in\calV$. The vector $\bc_{ij}\in \Real^d$ encodes the space
discretization. The index set $\calI(i)$ is called the local stencil. This
set collects only the degrees of freedom in $\calV$ that interact with $i$
(\ie $j\not\in\calI(i) \Rightarrow \bc_{ij}=\bzero$). We assume that the
$\Real^d$-valued coefficients $\bc_{ij}$ are such that $\sum_{j\in
\calI(i)} \polf(\bsfU_j^n) \bc_{ij}$ approximates
$\DIV(\polf(\bu(\cdot,t^n)))$ at some grid point $\ba_i$, and the
consistency error in space scales optimally with respect to the mesh size
for the considered approximation setting. We require that the method be
conservative; more precisely, we assume that
\begin{align}\label{cijprop}
  \bc_{ij} = - \bc_{ji}\quad \text{and}
  \quad \sum_{j\in \calI(i)} \bc_{ij} = \bzero.
\end{align}
Concrete expressions for $\bc_{ij}$ and $m_i$ are given in
\citep[\S4]{Guermond_Popov_Tomas_CMAME_2019} for continuous and
discontinuous finite elements as well as for finite volumes.  The
computations reported at the end of this paper are done with piecewise
linear continuous finite elements. But to stay general, we continue with
the abstract discretization-agnostic notation introduced above.

For completeness we now recall how the graph viscosity $d_{ij}\upLn$ is
defined in \citep{CGP_2022}. Given $i\in\calV$ and $j\in \calI(i)$, we set
$\bn_{ij}\eqq \bc_{ij} \|\bc_{ij}\|_{\ell^2}^{-1}$.  For $Z\in \{i,j\}$, we
set $\bsfU_Z\eqq (\varrho_Z,\bsfM_Z,\sfE_Z)\tr,$ $\sfp_Z\eqq
p(\bsfU_Z^n)$,
$\sfe_Z\eqq e(\bsfU_Z^n)$,
$\Gamma_Z\eqq\varrho_Z+\frac{\sfp_Z(1-b\varrho_Z)}{\sfe_Z}$, and
$\mathcal{E}_Z \eqq \sfE_Z -
\frac{\|\bsfM_Z-(\bsfM_Z\SCAL\bn_{ij})\bn_{ij}\|_{\ell^2}^2}{2\varrho_Z}$. The
non-negativity assumption on the pressure
\eqref{Pressure_is_nonnegative} and the assumptions on the density ($0<
\varrho_Z< \frac1b$) implies that
\begin{equation}
  \gamma_Z\eqq \frac{\Gamma_Z}{\varrho_Z}\ge 1. \label{def_of_gamma_Z}
\end{equation}
Then we consider the following Riemann problem:
\begin{equation}
  \partial_t
  \begin{pmatrix}
    \rho \\ m \\ \mathcal E \\ \Gamma
  \end{pmatrix}
  + \partial_x
  \begin{pmatrix}
    m\\
    \tfrac{1}{\rho}m^2 + \calpcov \\
    \tfrac{m}{\rho} (\mathcal{E}+ \calpcov )\\
    \tfrac{m}{\rho}\Gamma
  \end{pmatrix}=0,
  \quad \text{with} \quad
  \calpcov(\rho,m,\calE,\Gamma)\eqq
  \frac{\frac{\Gamma}{\rho}-1}{1-b\rho}\left(\mathcal{E} - \tfrac{m^2}{2\rho}\right),
  \label{Ext_Euler}
\end{equation}
with left data $(\rho_i,\bbm_i\SCAL\bn_{ij},\mathcal{E}_i, \Gamma_i)\tr$
and right data $(\rho_j,\bbm_j\SCAL\bn_{ij},\mathcal{E}_j, \Gamma_j)\tr$.
This problem is well-posed because $\gamma_Z\ge 1$. Its complete solution
is given in \cite[\S4]{CGP_2022}. We denote by
$\lambda(\bn_{ij},\bsfU_i,\bsfU_j)$ the maximum wave speed in this
problem. Let $\calA$ be a nontrivial convex subset of $\calB(b)$. We say
that $\calA$ is an invariant set for \eqref{Ext_Euler} if for every pair of
Riemann data in $\calA$ the solution of  \eqref{Ext_Euler} takes values in
$\calA$. We then have:
\begin{theorem}[{\citep[Thm.~4.6]{CGP_2022}}]
  \label{Thm:summary}
  Let $i\in\calV$. Let $\calA \subset\calB(b)$ be a convex invariant set
  for \eqref{Ext_Euler}. Assume that $\bsfU_j^n\in \calA$ for all
  $j\in\calI(i)$. For all $j\in\calI(i)$, let
  $\wlambda(\bn_{ij},\bsfU_i^n,\bsfU_j^n)$ be any positive number larger
  than or equal to $\lambda(\bn_{ij},\bsfU_i^n,\bsfU_j^n)$. Let
  \begin{equation}
    \label{def_dij_n}
    d_{ij}\upLn \eqq \max(\wlambda(\bn_{ij},\bsfU_i^n,\bsfU_j^n)
    \|\bc_{ij}\|_{\ell^2}, \wlambda(\bn_{ji},\bsfU_j^n,\bsfU_i^n)
    \|\bc_{ji}\|_{\ell^2}).
  \end{equation}
  Assume that $\dt$ is small enough so that
  $\dt \sum_{j\in\calI(i){\setminus}\{i\}} \frac{2d_{ij}\upLn}{m_i}\le 1$.
  Let $\bsfU_i\upLnp$ be the update defined in
  \eqref{low_order_scheme}. Then
 $\bsfU\upLnp_i \in \calA \subset\calB(b)$.
\end{theorem}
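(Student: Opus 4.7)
My strategy is the classical convex-combination bar-state argument of Guermond--Popov, but applied to the extended Riemann system~\eqref{Ext_Euler} rather than to the original Euler system. The key is to rewrite \eqref{low_order_scheme} as a convex combination of states that can each be identified with a space-average of an exact Riemann solution, and then exploit convexity and invariance of $\calA$.

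First I would use the consistency identity $\sum_{j\in\calI(i)}\bc_{ij}=\bzero$ from~\eqref{cijprop} to recast~\eqref{low_order_scheme} as
\begin{equation*}
\bsfU_i\upLnp = \Big(1 - \sum_{j\in\calI(i){\setminus}\{i\}}\tfrac{2 d_{ij}\upLn\,\dt}{m_i}\Big)\bsfU_i^n + \sum_{j\in\calI(i){\setminus}\{i\}}\tfrac{2 d_{ij}\upLn\,\dt}{m_i}\,\overline{\bsfU}_{ij},
\end{equation*}
where the bar states are
\begin{equation*}
\overline{\bsfU}_{ij} \eqq \tfrac12\big(\bsfU_i^n+\bsfU_j^n\big) - \tfrac{\|\bc_{ij}\|_{\ell^2}}{2 d_{ij}\upLn}\big(\polf(\bsfU_j^n)-\polf(\bsfU_i^n)\big)\bn_{ij}.
\end{equation*}
The stated CFL hypothesis guarantees that the coefficients above are nonnegative and sum to one, so by convexity of $\calA$ it suffices to show $\overline{\bsfU}_{ij}\in\calA$ for each $j\in\calI(i){\setminus}\{i\}$.

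Second, and this is the crux of the argument, I would identify $\overline{\bsfU}_{ij}$ with the space-average of the exact entropy solution of~\eqref{Ext_Euler} in direction $\bn_{ij}$ with left data $(\bsfU_i^n,\Gamma_i)$ and right data $(\bsfU_j^n,\Gamma_j)$. By construction of $\Gamma_Z$ the covolume pressure calibrates to the oracle value, $\calpcov(\bsfU_Z^n,\Gamma_Z)=\sfp_Z$, so the first $d+2$ components of the extended flux evaluated at the Riemann data coincide with $\polf(\bsfU_Z^n)\bn_{ij}$. Integrating~\eqref{Ext_Euler} over a slab $[-L/2,L/2]\times[0,\tau]$ whose width is chosen so that the Riemann fan stays strictly inside the slab, i.e.\ $\lambda(\bn_{ij},\bsfU_i^n,\bsfU_j^n)\,\tau\le L/2$, and then dividing by $L$ and projecting on the Euler components yields
\begin{equation*}
\tfrac{1}{L}\int_{-L/2}^{L/2}\bsfU(x,\tau)\,dx = \tfrac12(\bsfU_i^n+\bsfU_j^n) - \tfrac{\tau}{L}\big(\polf(\bsfU_j^n)-\polf(\bsfU_i^n)\big)\bn_{ij}.
\end{equation*}
Setting $\tau/L = \|\bc_{ij}\|_{\ell^2}/(2 d_{ij}\upLn)$ reproduces $\overline{\bsfU}_{ij}$ exactly, and the slab condition reduces to $\lambda(\bn_{ij},\bsfU_i^n,\bsfU_j^n)\|\bc_{ij}\|_{\ell^2}\le d_{ij}\upLn$, which is built into~\eqref{def_dij_n}. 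Invariance of $\calA$ for~\eqref{Ext_Euler}, together with convexity of $\calA$, then forces the space-average on the left to lie in $\calA$; the extra $\Gamma$-component of the extended system is irrelevant since $\calA\subset\calB(b)$ lives in the Euler phase space. Combining with the convex-combination representation and the hypothesis $\bsfU_j^n\in\calA$ for all $j\in\calI(i)$ concludes $\bsfU_i\upLnp\in\calA\subset\calB(b)$.

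The main obstacle, in my view, is not the rearrangement of~\eqref{low_order_scheme} (which is routine) but the compatibility between the extended Riemann problem~\eqref{Ext_Euler} and the Euler flux appearing in the scheme: one must justify that the Euler components of the extended flux, evaluated at the Riemann data, agree with $\polf(\bsfU_Z^n)$. This hinges on the calibration identity $\calpcov(\bsfU_Z^n,\Gamma_Z)=\sfp_Z$ baked into the definition of $\Gamma_Z$ in~\eqref{def_of_gamma_Z}; without this the bar-state formula would not match a Riemann average and the invariance argument would collapse.
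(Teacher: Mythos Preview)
Your proposal is correct and follows precisely the approach the paper indicates: the paper does not give a self-contained proof of Theorem~\ref{Thm:summary} but cites \citep[Thm.~4.6]{CGP_2022} and recapitulates the mechanism in \S\ref{Sec:limiting} (see~\eqref{def_dij_scheme_convex}--\eqref{bar_states} and the paragraph thereafter), namely the convex-combination rewriting of~\eqref{low_order_scheme} in terms of bar states $\overline{\bsfU}_{ij}^n$ which are then identified with space averages of the extended Riemann solution~\eqref{Ext_Euler}. Your emphasis on the calibration identity $\calpcov(\bsfU_Z^n,\Gamma_Z)=\sfp_Z$ as the link between the oracle flux $\polf$ in the scheme and the covolume flux in~\eqref{Ext_Euler} is exactly the point that makes the bar-state argument go through here, and matches the construction in \S\ref{Sec:low_order_scheme}.
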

\begin{remark}[Invariant domains]
  Theorem~\ref{Thm:summary} asserts that every convex invariant set in
  $\calB(b)$ is invariant by the update procedure \eqref{low_order_scheme}
  with the artificial viscosity $d_{ij}\upLn$ defined in \eqref{def_dij_n}.
  This means that if $\bsfU\upn_i\in\calA$ for all $i\in\calV$, then
  $\bsfU\upLnp_i\in\calA$ for all $i\in\calV$. We say that the method is
  invariant-domain preserving \emph{for $\calA$}. Notice in particular that
  the method is invariant-domain preserving for the admissible set
  $\calB(b)$. The admissible set $\calB(b)$ may not be the smallest
  invariant domain. For instance, if the oracle admits a mathematical
  entropy $s$, then the approximation defined above is also
  invariant-domain preserving for the set $\calA(\bu_0)\eqq
  \{\bv\in\calB(b)\st s(\bv)\ge \essinf_{\bx\in \Dom} s(\bu_0(\bx))\}$; see
  \eg \citep[Cor.~4.2]{Guermond_Popov_SINUM_2016}. By slight abuse of
  terminology and provided the context is unambiguous we will simply call a
  method invariant-domain preserving without quantifying the precise convex
  set.
\end{remark}
A source code to compute $\wlambda(\bn_{ij},\bsfU_i^n,\bsfU_j^n)$ is
available online \citep{guermond_jean_luc_2021_4685868}. A notable drawback
of the graph viscosity \eqref{def_dij_n} is that it reduces the space
accuracy of the method to first order. The remainder of the paper is
concerned with constructing a higher-order approximation and a respective
novel convex limiting technique that is invariant-domain preserving.

\begin{remark}[Pressure approximation]
  Notice that the oracle is only invoked to compute the left and right
  values of the pressure in \eqref{Ext_Euler}.  The pressure in the Riemann
  fan is approximated by two covolume equations of state. There is one
  covolume equation of state for each side of the contact discontinuity
  since $\gamma=\gamma_L$ on the left of the contact wave and
  $\gamma=\gamma_R$ on the right.
\end{remark}


\section{Provisional higher-order method}
\label{Sec:high_order}
In this section, we introduce a provisional higher-order method that
extends the update~\eqref{low_order_scheme} to second order when using
linear finite elements for the discretization. A convex limiting technique
for this provisional update is introduced in \S\ref{Sec:limiting}. The
provisional method is based on the work reported
in~\citep{Guermond_Nazarov_Popov_Tomas_SISC_2019,maier2021efficient}.

\subsection{The method}
Higher-order accuracy in space requires using the consistent mass
matrix instead of the lumped mass matrix for the discretization of the
time derivative. By reducing dispersive errors, the consistent mass
matrix is known to yield superconvergence at the grid points; see \eg
\cite{FLD:FLD719}, \citep{Guermond_pasquetti_2013},
\citep[Sec.~3.4]{maier2021efficient}, \cite{Thompson_2016}.  Let
$\polM$ be the mass matrix with entries $\{m_{ij}\}_{i,j\in\calV}$,
then $\polM^{-1}$ can be approximated by $\polI+\polB$ where
$b_{ij} = \delta_{ij} - \frac{m_{ij}}{m_j}$, $\delta_{ij}$ is the
Kronecker symbol, and $m_i\eqq \sum_{j\in\calI(i)} m_{ij}$. This
approximation has been shown in \citep{FLD:FLD719,Guermond_pasquetti_2013}
to be superconvergent for piecewise-linear continuous finite elements.  Let
$\sfR\in \Real^{I}$ with $I\eqq\text{card}(\calV)$, then using this
approximation we have $(\polM^{-1} \sfR)_i\approx \sfR_i +
\sum_{j\in\calI(i)} \left(b_{ij} \sfR_j - b_{ji} \sfR_i\right)$.  Notice
that $\sum_{j\in\calI(i)} b_{ji} \sfR_i = 0$, because $\sum_{j\in\calI(i)}
b_{ji}=0$. The skew-symmetry of the summand in $\sum_{j\in\calI(i)}
\left(b_{ij} \sfR_j - b_{ji} \sfR_i\right)$ is used in \S\ref{Sec:limiting}
for limiting purposes.

Let $\bsfU^{\text{H},n+1}$ denote the high-order update (here the
superindex $\upH$ reminds us that the update is higher order accurate). Then,
for every $i\in\calV$, the provisional high-order update is given by:
\begin{subequations}
  \label{provisional_high_order}
  \begin{align}
    \frac{m_i}{\tau}\left(\bsfU_i^{\text{H},n+1}-\bsfU_i^n\right)
    = \bR_i^n + \sum_{j\in\calI(i)}\left(b_{ij}\bR_j^n - b_{ji}\bR_i^n\right),
    \\
    \text{with}\qquad \qquad  \bR_i^n  \eqq\sum_{j\in\calI(i)}
    \left(-\polf(\bsfU_j^n)\SCAL \bc_{ij} + d_{ij}\upHn(\bsfU_j^n -
    \bsfU_i^n)\right).
    \label{High_order_RHS}
  \end{align}
\end{subequations}
The high-order graph viscosity coefficient $d_{ij}\upHn$, defined in
Section~\ref{entropy_commutator},  shares the same properties as its
low-order counterpart which are necessary for conservation:
\begin{equation}
  d_{ij}\upHn = d_{ji}\upHn, \quad
  0\le d_{ij}\upHn\leq d_{ij}\upLn,
  \quad \text{for every }j\in\calI(i),\; i\in\calV.
\end{equation}

\subsection{Entropy commutator}\label{entropy_commutator}
Using the entropy-viscosity methodology introduced in~\cite{guermond2011entropy},
we now discuss the construction of the high-order graph viscosity
coefficient $d_{ij}\upHn$ that is used in the high-order update
\eqref{High_order_RHS}. A complicating factor in this construction is
that the pressure is given by an oracle. We thus do not have
a priori knowledge on the equation of state and entropies of the
system.

Recall that $(\eta(\bu),\bF(\bu))$ is an entropy pair for the Euler
equations if
\begin{equation}
  \DIV\bF(\bu) = (\GRAD_\bu\eta(\bu))\tr\DIV\polf(\bu),\qquad
  \forall \bu\in\calB(b),\label{chain_rule}
\end{equation}
where $\polf(\bu)$ is the flux of the system~\eqref{Euler}.  Since one may
not have access to entropies for tabulated equations of state, we are going
to use at every $i\in\calV$ and every time $t^n$ one entropy pair
associated with the following flux
\begin{equation}
  \label{local_entropy}
  \polf^{i,n}(\bu) \eqq
  \begin{pmatrix}
    \bbm \\
    \bv\otimes\bbm + \calpcov^{i,n}(\bu)\polI_d \\
    \bv(E + \calpcov^{i,n}(\bu))
  \end{pmatrix},
  \qquad \calpcov^{i,n}(\bu) \eqq (\gamma_i^{\min,n} - 1)\frac{\rho
  e(\bu)}{1 - b\rho}.
\end{equation}
Here, $\gamma_i^{\min,n} \eqq\min_{j\in\calI(i)} \gamma_j^{n}$ with
$\gamma_j^{n}\eqq 1+ \frac{\sfp_j^n(1-b\varrho_j^n)}{\varrho_j^n \sfe_j^n}$
(see \eqref{def_of_gamma_Z}).  We use the following shifted Harten entropy pair in
the numerical tests reported in
\S\ref{Sec:illustrations}:
\begin{equation}
  \label{local_harten}
  \eta^{i,n}(\bu) \eqq \left(\frac{\rho^2 e(\bu)}{(1 - b \rho)^{1 - \gamma_i^{\min,n}}}
    \right)^{\frac{1}{\gamma_i^{\min,n} + 1}} - \frac{\rho}{\varrho_i^n}\eta_{\text{ref}}^{i,n},
  \qquad
  \bF^{i,n}(\bu) \eqq \frac{\bbm}{\rho} \eta^{i,n}(\bu),
\end{equation}
where
$\eta_{\text{ref}}^{i,n}\eqq
\left(\frac{(\varrho_i^n)^2e(\bu_i^n)}{(1 - b \varrho_i^n)^{1 -
      \gamma_i^{\min,n}}}\right)^{\frac{1}{\gamma_i^{\min,n} + 1}} $.
Then, we estimate ``entropy production'' by inserting the approximate
solution $\bu_h^n$ into a discrete counterpart of~\eqref{chain_rule}
which we write as follows:
\begin{equation}
  \label{local_chain_rule}
  \int_\Dom\left(\DIV\bF^{i,n}(\bu)
  -(\GRAD_\bu\eta^{i,n}(\bu))\tr\DIV\,\polf^{i,n}(\bu)\right)\varphi_i
  \diff x =0,
\end{equation}
where $\{\varphi_i\}_{i\in\calV}$ are the shape functions of the
finite element approximation.  Notice that the above identity holds
true for every smooth function $\bu\in \calB(b)$ and for every
$i\in\calV$. Substituting
$\bu_h\eqq\sum_{i\in\calV}\bsfU_i^n\varphi_i$
into~\eqref{local_chain_rule}, we estimate the local entropy residual
as follows:
\begin{equation}
  \label{local_entropy_residual}
  N_i^n\eqq \sum_{j\in\calI(i)}\Big(\bF^{i,n}(\bsfU_j^n)
  - (\GRAD_{\bu}\eta^{i,n}(\bsfU_i^n))\tr\polf^{i,n}(\bsfU_j^n)\Big)\SCAL\bc_{ij}.
\end{equation}
The residual~\eqref{local_entropy_residual} can be thought of as a
measure of how well the discrete solution $\bu_h^n$
satisfies~\eqref{local_chain_rule} in each local stencil
$\calI(i)$. We then define the normalized entropy residual as follows:
\begin{subequations}
  \begin{align}
    \label{normalized_residual}
    R_i^n & \eqq \frac{\abs{N_i^n}}{D_i + \epsilon D^{\max}},\qquad
    D^{\max} = \max_{i\in\calV} D_i, \qquad  \epsilon = 10^{-2},
    \\
    D_i^n & \eqq \Big|\sum_{j\in\calI(i)}\bF^{i,n}(\bsfU_j^n)\SCAL\bc_{ij}\Big|
    + \Big| \sum_{j\in\calI(i)}(\GRAD_{\bu}\eta^{i,n}(\bsfU_i^n))
    \tr \polf^{i,n}(\bsfU_j^n)\SCAL\bc_{ij}\Big|.
  \end{align}
\end{subequations}
By definition, the normalized residual $R_i^n$ has values in $[0,1]$
and, for piecewise linear finite elements, behaves like $\calO(h)$
where $h$ is the typical meshsize. Finally we set
\begin{equation}
  d_{ij}\upHn\eqq d_{ij}\upLn \max(\psi(R_i^n),\psi(R_j^n)),
\end{equation}
where the activation function $\psi$ is defined as follows:
 \begin{equation}
    \psi(x) \eqq \frac{4x_0^3 - (x+x_0)(x-2x_0)\big\{(x-2x_0) -
    \text{ReLU}(x-2x_0)\big\}}{4x_0^3},
  \end{equation}
%
and $\text{ReLU}(x) = (x+|x|)/2$ is the rectified linear activation
function. Notice that $\psi(0)=0$, $\psi(x_0)=\frac12$ and $\psi(x)=1$ for
all $x\in [2x_0,1]$ for a chosen $x_0\in[0,0.5]$. As a result, one recovers
$d_{ij}\upHn = d_{ij}\upLn $ if the entropy residual is larger than $2x_0$.
Note that $\psi(x^*)=x^*$ for $x^*=x_0(\frac32 - \frac12 \sqrt{9 - 16x_0})$
(the identity $\psi(x^*)=x^*$ also holds for $x^*\in\{0,1\}$). When $x\in
[0,x^*]$ we have $\psi(x)\sim \tfrac34 \frac{x^2}{x_0^2} + \calO(x^3)$.
The numerical tests reported in \S\ref{Sec:illustrations} are done with
$x_0=0.4$ (\ie $x^*\approx 0.27751$). An activation function with the same
purpose is used in \cite[Eq.~(8)]{persson_peraire_2006}. Up to a
translation, the activation function therein behaves like
$\frac12+\frac12\sin(\pi\frac{x-x_0}{2x_0})$ in the interval $[0,2x_0]$.

\begin{remark}[Entropy shift] The entropy shift considered in
    \eqref{local_harten} is motivated by the observation that the
    numerator $N_i^n$ is unchanged by the change of variable
    $\eta(\bu) \mapsto \eta(\bu) -\lambda \rho$ for all
    $\lambda\in \Real$ and for every entropy $\eta$ associated with
    the flux~\eqref{local_entropy}. The constant $\lambda$ is chosen
    in \eqref{local_harten} so that $\eta^{i,n}(\bu_i^n)=0$. This
    entropy shift  was first is introduced in
    \citep[\S3.4]{Guermond_Nazarov_Popov_Tomas_SISC_2019}.
\end{remark}


\section{Convex Limiting}\label{Sec:limiting}
The high-order update \eqref{provisional_high_order} is not guaranteed to
be invariant-domain preserving; in particular, it is not guaranteed to stay
in the admissible set $\calB(b)$. In order to correct this defect we now
discuss a new convex limiting technique that re-establishes invariant-domain
preservation for the final high-order update, \ie
$\bsfU\upnp_i\in\calB(b)$.

\subsection{Key observation}
A key observation is that one can
rewrite~\eqref{low_order_scheme} as follows:
\begin{subequations}
  \begin{align}
    \label{def_dij_scheme_convex}
    \bsfU_i\upLnp =
    \bigg(1 -\!\! \sum_{j \in \calI(i)\backslash\{i\}} \frac{2 \dt
    d_{ij}\upLn}{m_i} \bigg) \bsfU_{i}^{n}
    + \!\! \sum_{j \in \calI(i)\backslash\{i\}} \frac{2 \dt d_{ij}\upLn}{m_i}
    \overline{\bsfU}_{ij}\upn, \\
    \label{bar_states} \text{with} \qquad
    \oline{\bsfU}_{ij}^n\eqq\frac12(\bsfU_i^n + \bsfU_j^n)
    - \frac{1}{2 d_{ij}\upLn}\left(\polf(\bsfU_j^n) - \polf(\bsfU_i^n
    )\right)\SCAL\bc_{ij}.
  \end{align}
\end{subequations}
That is, the low-order update $\bsfU_i\upLnp$ is a convex combination
(under the appropriate CFL condition) of the local state $\bsfU_i^n$ and
the auxiliary states $\{\oline{\bsfU}_{ij}^n\}_{j\in\calV}$, \ie
\begin{align}
  \label{eq:convex_combination}
  \bsfU_i\upLnp\in\conv\{\overline\bsfU_{ij}^n\st j\in\calI(i)\}.
\end{align}
The main result established in \citep[Thm.~4.6]{CGP_2022} (and summarized
in Theorem~\ref{Thm:summary}) is that under the CFL condition stated in
Theorem~\ref{Thm:summary} and the definition \eqref{def_dij_n} for
$d_{ij}\upLn$, the states $\{\oline{\bsfU}_{ij}^n\}_{j\in\calI(i)}$ are in
$\calB(b)$ provided this is already the case of the states
$\{\bsfU_j^n\}_{j\in\calI(i)}$. This is done by proving that the states
$\oline{\bsfU}_{ij}^n$ are space averages of the solution to a Riemann
problem. We refer the reader to
\citep[Sec.~3.3]{Guermond_Popov_SINUM_2016},
\citep[Sec.~3.2]{Guermond_Nazarov_Popov_Tomas_SISC_2019} and
\citep[Sec.~3.2]{Guermond_Popov_Tomas_CMAME_2019} where this is discussed
in detail. We are going to use the states $\oline{\bsfU}_{ij}^n$ to define
local bounds in space and time to perform the limiting of the high-order
states $\{\bsfU\upHnp_i\}_{i\in\calV}$.

\subsection{Entropy surrogate}
\label{Sec:entropy_surogate}
Our goal is to use the methodology introduced
in~\citep{guermond2011entropy} to perform the convex limiting of the update
$\bsfU_i\upHnp$. In this context the use of an oracle
with little a priori knowledge on the equation of state poses a
significant challenge as it makes it impossible to properly define an
entropy. We resolve the impasse by introducing an artificial
\emph{surrogate} entropy that has the right mathematical properties for the
convex limiting methodology to be applied; see
Theorem~\ref{Thm:surrogate_entropy}.

For any admissible state $\bu\in\calB(b)$ and $\gamma\ge1$, we define
\begin{equation}
  S(\bu,\gamma) := \frac{(\rho\,e)(\bu)}
  {\rho^\gamma}(1 - b\rho)^{\gamma-1},
\end{equation}
where $\rho(\bu)$ and $e(\bu)$ are the density and specific internal
energy of the state $\bu$. Furthermore, for every index $i\in\calV$, we
set
\begin{equation}
  \label{def_of_gamma_in}
  \gamma_i^n \eqq 1 + \frac{\sfp_i^n(1-b\varrho_i^n)}{\varrho_i^n\sfe_i^n},
  \qquad\Gamma_i^n \eqq \varrho_i^n\,\gamma_i^n,
\end{equation}
where $\varrho_i^n\eqq \rho(\bsfU_i^n)$, $\sfp_i^n\eqq p(\bsfU_i^n)$
and
$\sfe_i^n \eqq e(\bsfU_i^n)= \frac{1}{\varrho_i^n}\big(\sfE_i^n -
\frac{\|\bsfM_i^n\|_{\ell^2}^2}{2\varrho_i^n}\big)$. The following
result is the key motivation for the definition of an entropy
surrogate.
\begin{lemma}
  \label{Lem:entropy_increase}
 For all $i\in\calV$,  assume that $\bsfU_i^n\in \calB(b)$. For all $i\in\calV$,
 all $j\in\calI(i)$, all $\gamma_{ij}\in[1,\min(\gamma_i^n,\gamma_j^n)]$, 
all left data
  $(\varrho_i^n,\bsfM_i^n\SCAL\bn_{ij},\mathcal{E}_i^n, \Gamma_i^n)\tr$ and
  right data $(\varrho_j^n,\bsfM_j^n\SCAL\bn_{ij},\mathcal{E}_j^n,
  \Gamma_j^n)\tr$ in the
  extended Riemann problem \eqref{Ext_Euler},
  and all $\bu\in\calB(b)$,
  we set
  \begin{equation}
    \label{Lem:surrogate_entropy_functional}
    \Psi_{ij}(\bu) := \rho e(\bu) - S_{ij}^{\min}
    \rho^{\gamma_{ij}}(1 - b\rho)^{1 - \gamma_{ij}},
    \quad\text{where}\;
    S_{ij}^{\min}\eqq
    \min\big(S(\bsfU_i^n,\gamma_{ij}),S(\bsfU_j^n,\gamma_{ij})\big).
  \end{equation}
  Then, $\Psi_{ij}(\bu)$ increases across shocks in the solution of the
  extended Riemann problem \eqref{Ext_Euler}  (if a shock wave exists).
\end{lemma}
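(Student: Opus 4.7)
\emph{Plan.} My approach is to rewrite $\Psi_{ij}$ as a product. From the definition of $S(\bu,\gamma_{ij})$ we have the identity $\rho e(\bu) = S(\bu,\gamma_{ij})\,\rho^{\gamma_{ij}}(1-b\rho)^{1-\gamma_{ij}}$, so
\begin{equation*}
  \Psi_{ij}(\bu) \;=\; \bigl(S(\bu,\gamma_{ij})-S_{ij}^{\min}\bigr)\,\rho^{\gamma_{ij}}(1-b\rho)^{1-\gamma_{ij}}.
\end{equation*}
The strategy is to show that across any shock both factors strictly increase, and that the first factor is already non-negative at the pre-shock state, so that their product strictly increases as well.

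\emph{Key steps.} In the extended Riemann problem, $\gamma=\Gamma/\rho$ is transported as a passive scalar and is therefore piecewise constant in the self-similar solution, equal to $\gamma_i^n$ on the left of the contact and $\gamma_j^n$ on the right. Any shock occurs in a gas whose pressure is given by the Noble--Abel EOS $p=(\gamma-1)\rho e/(1-b\rho)$ with a fixed exponent $\gamma\in\{\gamma_i^n,\gamma_j^n\}$. Introducing the covolume density $\tilde\rho\eqq\rho/(1-b\rho)$, the Rankine--Hugoniot jump relations reduce to those of a polytropic $\gamma$-law gas in the variable $\tilde\rho$. Classical analysis under the Lax admissibility condition then yields (i) $\rho_+>\rho_-$ (equivalently $\tilde\rho_+>\tilde\rho_-$), and (ii) the physical entropy proxy $p\,\tilde\rho^{-\gamma}$ strictly increases across the shock. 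For any exponent $\alpha\in[1,\gamma]$, the identity $p\,\tilde\rho^{-\alpha}=(p\,\tilde\rho^{-\gamma})\,\tilde\rho^{\gamma-\alpha}$ shows that $p\,\tilde\rho^{-\alpha}$ is also strictly increasing across the shock, since both factors on the right are non-decreasing. Using the Noble--Abel EOS,
\begin{equation*}
  S(\bu,\alpha) \;=\; \frac{\rho e(\bu)}{\rho^\alpha}(1-b\rho)^{\alpha-1} \;=\; \frac{p\,\tilde\rho^{-\alpha}}{\gamma-1},
\end{equation*}
so $S(\bu,\alpha)$ strictly increases across every shock for any $\alpha\in[1,\gamma]$. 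Since $\gamma_{ij}\le\min(\gamma_i^n,\gamma_j^n)$, the choice $\alpha=\gamma_{ij}$ is admissible on either side of the contact, yielding $S(\bu_+,\gamma_{ij})>S(\bu_-,\gamma_{ij})$. Finally, a short logarithmic-derivative computation (the derivative of $\log(\rho^{\gamma_{ij}}(1-b\rho)^{1-\gamma_{ij}})$ equals $\gamma_{ij}/\rho + (\gamma_{ij}-1)b/(1-b\rho)\ge 0$) shows that $\rho\mapsto\rho^{\gamma_{ij}}(1-b\rho)^{1-\gamma_{ij}}$ is strictly increasing on $(0,1/b)$ for $\gamma_{ij}\ge 1$, so this second factor also strictly increases across the shock.

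\emph{Conclusion and main obstacle.} In the self-similar Riemann solution, the pre-shock (upstream) state $\bu_-$ is always one of the unperturbed Riemann data $\bsfU_i^n$ or $\bsfU_j^n$, so $S(\bu_-,\gamma_{ij})\ge S_{ij}^{\min}$ by definition of the minimum. Hence at $\bu_-$ the first factor of $\Psi_{ij}$ is non-negative and the second factor strictly positive; combining this with the two strict monotonicities established above gives $\Psi_{ij}(\bu_+)>\Psi_{ij}(\bu_-)$. The main technical obstacle is the $\alpha<\gamma$ extension in step (ii): increase of the physical entropy ($\alpha=\gamma$) across shocks is textbook, but proving monotonicity of $S(\cdot,\alpha)$ for the smaller exponents $\alpha=\gamma_{ij}$ actually used in the limiter requires combining the density-monotonicity (i) with the entropy inequality through the factor $\tilde\rho^{\gamma-\alpha}$, and hinges on the Noble--Abel-to-ideal-gas reduction via $\tilde\rho$.
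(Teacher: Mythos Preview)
Your proof is correct and follows essentially the same route as the paper's. Your factorization $\Psi_{ij}(\bu)=\bigl(S(\bu,\gamma_{ij})-S_{ij}^{\min}\bigr)\,\rho^{\gamma_{ij}}(1-b\rho)^{1-\gamma_{ij}}$ is exactly the paper's identity $g(\tau)=\frac{(\tau-b)^{1-\gamma}}{\tau}\bigl(\widetilde\varsigma(\tau)-c\bigr)$ (with $\widetilde\varsigma(\tau)=e(\tau-b)^{\gamma_{ij}-1}=S(\bu,\gamma_{ij})$), and your key step $p\,\tilde\rho^{-\alpha}=(p\,\tilde\rho^{-\gamma_Z})\,\tilde\rho^{\gamma_Z-\alpha}$ is the paper's $\widetilde\varsigma(\tau)=\varsigma(\tau)(\tau-b)^{\gamma_{ij}-\gamma_Z}$ written with $\tilde\rho=1/(\tau-b)$ instead of $\tau-b$; the only cosmetic difference is that the paper differentiates the product explicitly while you argue factor by factor, and the paper rederives the density and entropy monotonicity across covolume shocks from the Rankine--Hugoniot relation whereas you invoke the Noble--Abel-to-ideal reduction and cite the classical results.
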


\begin{proof}
  We omit the superscript $^n$ in the proof to simplify the notation. The
  solution to the extended Riemann problem \eqref{Ext_Euler}, $(\rho, m,
  \mathcal E, \Gamma)\tr(x,t)$, is given in \cite{CGP_2022}.  In
  particular, we have that $\gamma(x,t) = \gamma_i$ if $x/t <
  v^*$ and $\gamma(x,t) = \gamma_j$ if $x/t > v^*$.  Here,
  $\gamma(x,t)=\Gamma(x,t)/\rho(x,t)$, and $v^*$ is the speed of the contact
  wave.  Let $Z \in\{i, j\}$, with the convention that the index $i$ is for the
  left state and $j$ is for the right state. Assume that the $Z$-wave is a
  shock wave.  Let $\bu_Z \in \calB(b)$ be the state before the shock and
  let $\bu \in \calB(b)$ be an arbitrary state connected to $\bu_Z$ through
  a shock curve. With the notation $\tau\eqq\frac{1}{\rho}$ for the
  specific volume (not to be confused with the time step), and since
  $\gamma=\gamma_Z$ along the wave curve, the Rankine-Hugoniot condition
  implies that (see \cite[Eq.~(4.8), p.~144]{Godlewski_raviart_1996}),
  $e(\bu) - \sfe_Z + \tfrac12 \big(\calpcov(\tau, e(\bu)) + \calpcov(\tau_Z,
  \sfe_Z)\big)(\tau -\tau_Z) = 0$, where by slight abuse of notation
  we renamed the pressure in \eqref{Ext_Euler} by setting $\calpcov(\tau,
  e)\eqq \frac{\gamma_Z-1}{\tau-b} e$.
  We then infer that $e(\bu)$ only depends on $\rho$ along the shock curve; more precisely, we have
  \begin{equation*}
    e(\bu) =  \sfe_Z\,
    \frac{1 -\frac{(\gamma_Z-1)(\tau-\tau_Z)}{2(\tau_Z -b)}}
    {1+\frac{(\gamma_Z-1)(\tau-\tau_Z)}{2(\tau-b)}} \qqe r(\tau).
  \end{equation*}
  Notice that this function is well defined only on the interval
    $(\tau_Z^\infty,\infty)$ with
 \begin{equation}
 \tau_Z^\infty\eqq\tfrac{(\gamma_Z-1)\tau_Z + 2b}{\gamma_Z+1}, \label{def_of_tauZinfty}
 \end{equation}
    and it is nonegative on the interval $(\tau_Z^\infty,\tau_Z^0)$
    with $\tau_Z^0\eqq\tfrac{(\gamma_Z+1)\tau_Z - 2b}{\gamma_Z-1}$.
    Notice that $b<\tau_Z^\infty < \tau_Z < \tau_Z^0$ since we assumed that $\bsfU_Z\in\calB(b)$.
  We now show that the function
  $\calB(b) \ni \bu \mapsto \rho e(\bu) - c \rho^{\gamma} (1 - b\rho)^{1 -
    \gamma}$ is nonnegative and increasing on the shock curve for all
  $\gamma\in (1,\gamma_Z]$ and
  $c \in (0,\sfe_Z (\tau_Z - b)^{\gamma-1}]$. This will then prove the
  assertion for the choices $\gamma\eqq\gamma_{ij}$ and
  $c\eqq S_{ij}^{\min}$, because $\gamma_{ij}\in (1, \gamma_Z]$ and
\begin{equation*}
    0 < S_{ij}^{\min} \le \varrho_Z^{1-\gamma_{ij}} \sfe_Z (1
    -b\varrho_Z)^{\gamma_{ij}-1} = \sfe_Z (\tau_Z -b)^{\gamma-1},
\end{equation*}
\ie $S_{ij}^{\min} \in (0,\sfe_Z (\tau_Z - b)^{\gamma-1}]$.
Setting $q(\tau) \eqq \frac{\gamma_Z-1}{\tau-b}
r(\tau)$, we have $q'(\tau)=- \frac{4\gamma_Z(\gamma_Z-1)}{(\tau + \tau_Z - 2b
+ \gamma(\tau-\tau_Z))^2}<0$; hence, the pressure, $q$, is a monotone increasing
function of $\rho$ along shock curves. By definition of shock curves,
starting from the state $\bsfU_Z$ the pressure increases, we conclude that
$\rho$ also increases along shock curves, \ie $\rho\in [\varrho_Z,\frac{1}{b})$
or $\tau\in (b,\tau_Z]$. Actually, the pressure is finite only in the range
$\tau\in (\tau_Z^\infty,\tau_Z]$; hence, showing that $\calB(b) \ni \bu \mapsto
\rho e(\bu) - c \rho^{\gamma} (1 - b\rho)^{1 - \gamma}$ is nonnegative
increasing on shock curves, is equivalent to showing that
  \begin{equation*}
    (\tau_Z^\infty,\tau_Z]
    \ni \tau \mapsto g(\tau)\eqq \tau^{-1} r(\tau) - c \tau^{-1}
    (\tau - b)^{1-\gamma}
  \end{equation*}
  is nonnegative decreasing on shock curves. Recall that the specific
  entropy for the covolume equation of state is given by $s(\tau,\sfe) =
  \log(\sfe^{\frac1{\gamma_Z - 1}}(\tau - b))$.  A fundamental property of the
  specific entropy is that it is an increasing function along shocks.  That
  is, $s(\tau,r(\tau))$ is a decreasing function over the interval $\tau
  \in (\tau_Z^\infty,\tau_Z]$.  This also means that $(\tau_Z^\infty,\tau_Z] \ni \tau \mapsto
  \varsigma(\tau) \eqq \exp\big((\gamma_Z - 1)s(\tau, r(\tau))\big) =
  r(\tau)(\tau - b)^{\gamma_Z - 1}$ is a decreasing function. We now have
  \begin{equation*}
   g(\tau) = \tau^{-1} r(\tau) - c \tau^{-1} (\tau-b)^{1-\gamma} =
  \varsigma(\tau) \tau^{-1} (\tau-b)^{1-\gamma_Z} - c \tau^{-1}
  (\tau-b)^{1-\gamma}.
  \end{equation*}
  Let $\widetilde{\varsigma}$ be the
  defined by $\widetilde{\varsigma}(\tau) \eqq
  \varsigma(\tau)(\tau-b)^{\gamma-\gamma_Z}$.  Then we have
  \begin{equation*}
    g(\tau)= \frac{(\tau-b)^{1-\gamma}}{\tau}
    (\widetilde{\varsigma}(\tau)-c).
  \end{equation*}
  Computing the derivative of $g$, we see,
  \begin{equation*}
    g'(\tau) =
    \frac{(\tau-b)^{1-\gamma}}{\tau}\widetilde{\varsigma}'(\tau) -
    \frac{(\gamma - 1)\tau + (\tau - b)}{\tau^2 (\tau - b)^\gamma}
    (\widetilde{\varsigma}(\tau) - c).
  \end{equation*}
  Note that
  \begin{equation*}
    \widetilde{\varsigma}'(\tau) = (\tau
    - b)^{\gamma - \gamma_Z} \varsigma'(\tau) + (\gamma - \gamma_Z) (\tau
    - b)^{\gamma - \gamma_Z - 1} \varsigma(\tau).
  \end{equation*}
  Then $\widetilde{\varsigma}'(\tau) \leq 0$ as $\varsigma'(\tau) \leq 0$
  for $\tau\in(\tau_Z^\infty,\tau_Z]$, $\gamma \leq \gamma_Z$, and
  $0\le \varsigma(\tau)$, and
  $\inf_{\tau \in (b,\tau_Z]} \widetilde{\varsigma}(\tau) =
  \widetilde{\varsigma}(\tau_Z) = \sfe_Z(\tau_Z - b)^{\gamma - 1} \ge
  c$ as $\widetilde{\varsigma}$ is a decreasing function.  Thus, by
  the choice of the constants $c$ and $\gamma$, we have that
  $g(\tau) \geq 0$ and $g'(\tau) \leq 0$. Hence,
  $[\varrho_Z,\frac{1}{\tau_Z^\infty})\ni \rho \mapsto \rho e(\bu) - c
  \rho^{\gamma} (1 - b\rho)^{1 - \gamma}$ is nonnegative increasing on
  shock curves. This completes the proof.
\end{proof}

We now define the surrogate entropy. For all $i\in\calV$, we set
\begin{subequations}
  \begin{align}
    \gamma_i^{\min,n}&\eqq \min_{j\in\calI(i)} \gamma_j^n,\qquad
    S_i^{\min,n}\eqq \min(\min_{j\in\calI(i)}S(\bsfU^n_j; \gamma_i^{\min,n}),
    \min_{j\in\calI(i)}S(\overline{\bsfU}^n_{ij};\gamma^{\min,n}_i)), \label{entropy_min}\\
    \label{surrogate_entropy_functional}
    \Psi^s_i(\bu) &\eqq \rho e(\bu) - S_i^{\min,n}
    \rho^{\gamma_i^{\min,n}}(1 - b\rho)^{1 - \gamma_i^{\min,n}}.
  \end{align}
\end{subequations}
The following result summarizes the content of this section.
\begin{theorem}[Surrogate entropy]
  \label{Thm:surrogate_entropy} The following
  holds true with the same assumptions and
  definitions as in Lemma~\ref{Lem:entropy_increase}:
  \begin{enumerate}[font=\upshape,label=(\roman*)]
    \item
      \label{item1:Thm:surrogate_entropy}
      The function $\Psi^s_i: \calB(b) \to \Real$ is concave.
    \item
      \label{item2:Thm:surrogate_entropy}
      Let $\bsfU_i\upLnp$ be the update defined in
      \eqref{low_order_scheme}. We have $\Psi^s_i(\bsfU_i\upLnp)\ge 0$
      under the CFL condition $\dt \sum_{j\in\calI(i){\setminus}\{i\}}
      \frac{2d_{ij}\upLn}{m_i}\le 1$.
    \item
      \label{item3:Thm:surrogate_entropy}
      Let $j\in\calI(i)$. Consider the extended Riemann problem
      \eqref{Ext_Euler} with left data
      $(\varrho_i^n,\bsfM_i^n\SCAL\bn_{ij},\mathcal{E}_i^n,
      \Gamma_i^n)\tr$ and right data
      $(\varrho_j^n,\bsfM_j^n\SCAL\bn_{ij},\mathcal{E}_j^n,
      \Gamma_j^n)\tr$. If the solution has shock waves, then the
      function $\Psi^s_i(\bu)$ increases across the shocks.
  \end{enumerate}
\end{theorem}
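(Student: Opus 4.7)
My plan is to prove \ref{item1:Thm:surrogate_entropy}, \ref{item2:Thm:surrogate_entropy}, \ref{item3:Thm:surrogate_entropy} in order, since \ref{item2:Thm:surrogate_entropy} will use the concavity from \ref{item1:Thm:surrogate_entropy} together with the convex combination representation of the low-order update, while \ref{item3:Thm:surrogate_entropy} will reduce to a mild rereading of Lemma~\ref{Lem:entropy_increase}.

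For item \ref{item1:Thm:surrogate_entropy}, I would decompose $\Psi^s_i$ into $\rho e(\bu)$ and $-S_i^{\min,n}\rho^{\gamma_i^{\min,n}}(1-b\rho)^{1-\gamma_i^{\min,n}}$. The first piece equals $E-\tfrac12\|\bbm\|_{\ell^2}^2/\rho$ and is concave on $\{\rho>0\}$ because the perspective $(\rho,\bbm)\mapsto\|\bbm\|_{\ell^2}^2/\rho$ is convex. For the second piece I would show that $f(\rho)\eqq\rho^{\gamma}(1-b\rho)^{1-\gamma}$ is convex on $(0,1/b)$ for every $\gamma\ge 1$. A direct differentiation, followed by an algebraic cancellation in the cross-terms, produces the clean identity
\begin{equation*}
  f''(\rho)=\gamma(\gamma-1)\,\rho^{\gamma-2}(1-b\rho)^{-\gamma-1}\ge 0.
\end{equation*}
Since $S_i^{\min,n}\ge 0$, concavity of $\Psi^s_i$ on $\calB(b)$ follows.

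For item \ref{item2:Thm:surrogate_entropy}, the identity \eqref{def_dij_scheme_convex} expresses $\bsfU_i\upLnp$, under the stated CFL condition, as a convex combination of $\bsfU_i^n$ and the bar states $\{\oline{\bsfU}_{ij}^n\}_{j\in\calI(i)\setminus\{i\}}$, all of which lie in $\calB(b)$ by Theorem~\ref{Thm:summary}. Combined with the concavity from \ref{item1:Thm:surrogate_entropy}, this reduces the claim to showing $\Psi^s_i(\bu)\ge 0$ for every such $\bu$. I would conclude via the factorization
\begin{equation*}
  \Psi^s_i(\bu)=\rho^{\gamma_i^{\min,n}}(1-b\rho)^{1-\gamma_i^{\min,n}}\bigl(S(\bu,\gamma_i^{\min,n})-S_i^{\min,n}\bigr),
\end{equation*}
in which the prefactor is positive on $\calB(b)$ and the bracket is nonnegative by the very definition \eqref{entropy_min} of $S_i^{\min,n}$ as the minimum of $S(\cdot,\gamma_i^{\min,n})$ over exactly these states.

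For item \ref{item3:Thm:surrogate_entropy}, I would apply Lemma~\ref{Lem:entropy_increase} with $\gamma_{ij}\eqq\gamma_i^{\min,n}$, which is an admissible choice since $\gamma_i^{\min,n}\le\min(\gamma_i^n,\gamma_j^n)$. The proof of Lemma~\ref{Lem:entropy_increase} actually establishes that $\bu\mapsto\rho e(\bu)-c\rho^{\gamma_{ij}}(1-b\rho)^{1-\gamma_{ij}}$ is nonnegative and monotone across the $Z$-shock for \emph{every} constant $c\in(0,\sfe_Z(\tau_Z-b)^{\gamma_{ij}-1}]=(0,S(\bsfU_Z^n,\gamma_{ij})]$ with $Z\in\{i,j\}$, not merely for the distinguished value $c=S_{ij}^{\min}$. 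Since $S_i^{\min,n}\le\min\bigl(S(\bsfU_i^n,\gamma_i^{\min,n}),S(\bsfU_j^n,\gamma_i^{\min,n})\bigr)$ by \eqref{entropy_min}, the choice $c=S_i^{\min,n}$ is admissible on both sides of the contact wave, and \ref{item3:Thm:surrogate_entropy} follows. The only genuinely non-routine step is the computation of $f''$ in \ref{item1:Thm:surrogate_entropy}: with the opposite-signed exponents $\gamma$ and $1-\gamma$ the computation looks delicate, but the surprisingly clean cancellation that yields $\gamma(\gamma-1)\rho^{\gamma-2}(1-b\rho)^{-\gamma-1}$ is what makes the whole edifice work.
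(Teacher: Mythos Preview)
Your proposal is correct and follows essentially the same route as the paper. The only cosmetic difference is in \ref{item1:Thm:surrogate_entropy}: the paper argues convexity of $\rho\mapsto\rho^{\gamma}(1-b\rho)^{1-\gamma}$ by rewriting it as $\phi(\rho)=\rho\,(\tfrac{1}{\rho}-b)^{1-\gamma}$ and invoking convexity of $t\mapsto t^{1-\gamma}$ via a perspective-type observation, whereas you compute $f''$ directly; both yield the same conclusion, and your explicit formula $f''(\rho)=\gamma(\gamma-1)\rho^{\gamma-2}(1-b\rho)^{-\gamma-1}$ is correct. Your treatment of \ref{item3:Thm:surrogate_entropy} is in fact more careful than the paper's one-line appeal to Lemma~\ref{Lem:entropy_increase}: you correctly note that $\Psi_i^s$ uses the constant $S_i^{\min,n}$, which can be strictly smaller than the $S_{ij}^{\min}$ appearing in the lemma's statement, and that one therefore needs the stronger fact, established inside the lemma's proof, that the monotonicity holds for every $c\in(0,\sfe_Z(\tau_Z-b)^{\gamma_{ij}-1}]$.
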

\begin{proof}
  \ref{item1:Thm:surrogate_entropy} Recall that $\bu \mapsto \rho e(\bu)$
  is concave.  Moreover, the function $\phi(x) = x(\frac{1}{x}
  -b)^{\gamma_i^{\min,n}}$ is convex because $\gamma_i^{\min,n}\ge 1$. As a
  result, $- S_i^{\min,n} \rho^{\gamma_i^{\min,n}}(1 - b\rho)^{1 -
  \gamma_i^{\min,n}} = - S_i^{\min,n} \phi(\rho)$ is concave because $
  S_i^{\min,n}\ge 0$. This proves that $\Psi^s_i: \calB(b) \to \Real$ is
  concave.\\
  \ref{item2:Thm:surrogate_entropy} Recalling from
  Theorem~\ref{Thm:summary} that under the CFL condition $\dt
  \sum_{j\in\calI(i){\setminus}\{i\}} \frac{2d_{ij}\upLn}{m_i}\le 1$, the
  state $\bsfU_i\upLnp$ is in the convex hull of the states
  $\{\overline{\bsfU}_{ij}^n\}_{j\in\calI(i)}$, the concavity of the
  functional $\Psi^s_i$ implies that $\Psi^s_i(\bsfU_i\upLnp)\ge
  \min_{j\in\calI(i)} \Psi^s_i(\overline{\bsfU}_{ij}^n)$. But for all
  $j\in\calI(i)$ we have
  \begin{align*}
    \Psi^s_i(\overline{\bsfU}_{ij}^n) & = (\rho
    e)(\bu)(\overline{\bsfU}_{ij}^n) - S_i^{\min,n}
    (\overline{\varrho}_{ij}^n)^{\gamma_i^{\min,n}}(1 -
    b\overline{\varrho}_{ij}^n)^{1 - \gamma_i^{\min,n}} \\ & = (
    S(\overline{\bsfU}_{ij}^n, \gamma_i^{\min,n}) - S_i^{\min,n})
    (\overline{\varrho}_{ij}^n)^{\gamma_i^{\min,n}}(1 -
    b\overline{\varrho}_{ij}^n)^{1 - \gamma_i^{\min,n}}.
  \end{align*}
  And we conclude that $\Psi^s_i(\overline{\bsfU}_{ij}^n)\ge 0$ by
  definition of $S_i^{\min,n}$. The assertion readily follows. \\
  \ref{item3:Thm:surrogate_entropy} The assertion is a consequence of
  Lemma~\ref{Lem:entropy_increase}.
\end{proof}

\subsection{Limiting on the density}
Limiting on the density is performed exactly as done in
\citep[\S4.4]{Guermond_Nazarov_Popov_Tomas_SISC_2019}. First, we
define the local bounds
\begin{align}
  \varrho^{\min,n}_i \eqq \min_{j\in\calI(i)} \overline{\varrho}^n_{ij}
  \quad \text{ and } \quad \varrho^{\max,n}_i \eqq \max_{j\in\calI(i)}
  \overline{\varrho}^n_{ij},
\end{align}
where $\overline{\varrho}^n_{ij}$ is the density of the auxiliary state
$\overline{\bsfU}^n_{ij}$.

Second, we relax these bounds to ensure that second-order accuracy is
maintained in the maximum norm.  The relaxation is done as in
\citep[\S4.7]{Guermond_Nazarov_Popov_Tomas_SISC_2019} with a
modification of $\varrho^{\max,n}_i$ to accommodate the covolume
constraint $1-b\rho>0$.  This modification is justified in the
following lemma.

\begin{lemma}[Maximum density bound]
  \label{Lem:density_upper_bound} The following holds true.
  \begin{enumerate}[font=\upshape,label=(\roman*)]
    \item
      \label{Item1:Lem:density_upper_bound}
      The density in the Riemann problem~\eqref{Ext_Euler} satisfies the
      following upper bound
\begin{equation}
  \rho\,\leq\,\max_{Z\in\{i,j\}} \frac{1}{\tau_Z^\infty} =
    \max_{Z\in\{i,j\}}\frac{(\gamma_Z+1)\rho_Z}{(\gamma_Z-1) + 2b\rho_Z}.
\end{equation}
    \item
      \label{Item2:Lem:density_upper_bound}
      Under the CFL condition stated in Theorem~\ref{Thm:summary}, the
      low-order update satisfies the following:
\begin{equation}
        \varrho_i\upLnp \le
        \frac{(\gamma_i^{\min,n}+1)\varrho_i^{\max,n}}{(\gamma_i^{\min,n}-1) + 2b\varrho^{\max,n}}.
        \label{Rho_max_b_not_0}
\end{equation}
      %
  \end{enumerate}
\end{lemma}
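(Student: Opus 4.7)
The plan is to handle Part~(i) first by invoking the explicit structure of the solution to the extended Riemann problem~\eqref{Ext_Euler}, worked out in \cite[\S4]{CGP_2022}. That solution is composed of four constant states separated by a left $i$-wave, a contact discontinuity, and a right $j$-wave; across the contact, $\gamma=\gamma_i^n$ on the left and $\gamma=\gamma_j^n$ on the right. For each $Z\in\{i,j\}$ I will show that the density stays below $1/\tau_Z^\infty$ everywhere in the $Z$-wave and the adjacent $Z$-star region. If the $Z$-wave is a rarefaction, the density is monotone between $\varrho_Z^n$ and the star density $\rho_Z^\ast\le \varrho_Z^n$. If it is a shock, the covolume Hugoniot analysis already performed in the proof of Lemma~\ref{Lem:entropy_increase} confines the specific volume on the shock curve to $(\tau_Z^\infty,\tau_Z^n]$, so the density lies in $[\varrho_Z^n, 1/\tau_Z^\infty)$. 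Combining with the admissibility inequality $b\varrho_Z^n<1$, which rearranges to $\varrho_Z^n\le 1/\tau_Z^\infty$, yields $\rho\le 1/\tau_Z^\infty$ throughout the $Z$-side. Taking the maximum over $Z\in\{i,j\}$ and substituting the formula for $\tau_Z^\infty$ from~\eqref{def_of_tauZinfty} produces the stated bound.

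For Part~(ii), the plan is to use the convex-combination representation~\eqref{def_dij_scheme_convex} of $\bsfU_i\upLnp$. With the convention $\overline{\bsfU}_{ii}^n=\bsfU_i^n$ (which follows directly from~\eqref{bar_states} since the flux difference in the definition vanishes for $j=i$), the density component $\varrho_i\upLnp$ is a convex combination of $\{\overline{\varrho}_{ij}^n\}_{j\in\calI(i)}$ under the CFL condition of Theorem~\ref{Thm:summary}, so $\varrho_i\upLnp\le \varrho_i^{\max,n}$. It then remains to verify the elementary inequality $\varrho_i^{\max,n}\le\frac{(\gamma_i^{\min,n}+1)\varrho_i^{\max,n}}{(\gamma_i^{\min,n}-1)+2b\varrho_i^{\max,n}}$, which since $\gamma_i^{\min,n}\ge 1$ is equivalent to $b\varrho_i^{\max,n}\le 1$. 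This is a consequence of Part~(i) applied pairwise: each $\overline{\varrho}_{ij}^n$ is a space average of the solution of the Riemann problem between $\bsfU_i^n$ and $\bsfU_j^n$, so $\overline{\varrho}_{ij}^n\le\max_{Z\in\{i,j\}} 1/\tau_Z^\infty< 1/b$ using $\tau_Z^\infty>b$; taking the maximum over $j\in\calI(i)$ yields the required $b\varrho_i^{\max,n}<1$.

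The main obstacle is the density bound in Part~(i). The shock piece comes essentially for free from the Hugoniot calculation already carried out for Lemma~\ref{Lem:entropy_increase}; the rarefaction and contact pieces add no real difficulty once one recalls the monotonicity of $\rho$ through a simple wave and the fact that $\rho$ is constant on each side of the contact. Part~(ii) is then a short algebraic corollary combining the convex-combination structure of the low-order update with the covolume admissibility implied by~(i). The only subtlety to watch for is that the bound in (ii) is a \emph{relaxation} of the natural local maximum $\varrho_i^{\max,n}$ upward (rather than a sharpening), and what must be checked is precisely that this relaxation is compatible with the covolume constraint $b\rho<1$.
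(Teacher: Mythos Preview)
Your argument for Part~(i) is essentially identical to the paper's: both split into the rarefaction case (density does not exceed $\varrho_Z^n$) and the shock case (where the Hugoniot analysis from Lemma~\ref{Lem:entropy_increase} confines $\tau$ to $(\tau_Z^\infty,\tau_Z]$), and both conclude by taking the maximum over $Z\in\{i,j\}$.

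For Part~(ii), however, you take a genuinely different and more elementary route. The paper first applies Part~(i) to each bar state to obtain
\[
  \varrho_i\upLnp \;\le\; \max_{j\in\calI(i)} \overline{\varrho}_{ij}^n
  \;\le\; \max_{j\in\calI(i)} \frac{(\gamma_j^n+1)\varrho_j^n}{(\gamma_j^n-1)+2b\varrho_j^n},
\]
and then invokes the monotonicity of $(\gamma,\rho)\mapsto \frac{(\gamma+1)\rho}{(\gamma-1)+2b\rho}$ (increasing in $\rho$, decreasing in $\gamma$) to pass to the pair $(\gamma_i^{\min,n},\varrho_i^{\max,n})$. You instead stop at $\varrho_i\upLnp\le \varrho_i^{\max,n}$ and observe that the stated bound~\eqref{Rho_max_b_not_0} dominates $\varrho_i^{\max,n}$ itself: the inequality $\varrho_i^{\max,n}\le \frac{(\gamma_i^{\min,n}+1)\varrho_i^{\max,n}}{(\gamma_i^{\min,n}-1)+2b\varrho_i^{\max,n}}$ is equivalent to $b\varrho_i^{\max,n}\le 1$, which follows from $\overline{\bsfU}_{ij}^n\in\calB(b)$ (or, as you note, from Part~(i) via $\tau_Z^\infty>b$). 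Your approach is shorter and sidesteps the monotonicity computation; the paper's chain of inequalities, by contrast, makes explicit how the bound arises from the pairwise Riemann data $(\gamma_j^n,\varrho_j^n)$, which is informative but not needed for the statement as written.
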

\begin{proof}
  \ref{Item1:Lem:density_upper_bound} Let $Z\in\{i,j\}$, with the
  convention that the index $i$ is for the left state and $j$ is for
  the right state. Recall that the pressure in the Riemann
    problem~\eqref{Ext_Euler} is defined by the function
    $\calpcov(\bsfU)$ (with a slight abuse of notation) and we also
    have $\sfp_Z=\calpcov(\sfU_Z)$.  If the elementary $Z$-wave is an
  expansion then $\rho$ decreases along the expansion wave and we have
  $\rho\le \varrho_Z$. If instead the elementary $Z$-wave is a shock,
  we have established in the proof of Lemma~\ref{Lem:entropy_increase}
  that $\tau\in(\tau_Z^\infty,\tau_Z]$, \ie
  $\rho \in [\rho_Z,\frac{1}{\tau_Z^\infty})$; see
  \eqref{def_of_tauZinfty}. Whence the assertion.
  \\
  \ref{Item2:Lem:density_upper_bound}
  Using \eqref{def_dij_scheme_convex}, we observe that
\begin{align*}
    \varrho_i\upLnp &\le \max_{j\in\calI(i)} \oline{\varrho}_{ij}^n
                      \le \max_{j\in\calI(i)} \frac{\gamma_j+1}{(\gamma_j-1)\tau_j + 2b}
     = \max_{j\in\calI(i)} \frac{(\gamma_j+1)\varrho_j}{(\gamma_j-1) + 2b\varrho_j}.
\end{align*}
As the function $\frac{(\gamma+1)\rho}{(\gamma-1) + 2b\rho}$ is monotone increasing with respect to
$\rho$ and monotone decreasing with respect to $\gamma$, the inequality
\eqref{Rho_max_b_not_0} follows readily.
\end{proof}

Proceeding as in \citep[\S4.7]{Guermond_Nazarov_Popov_Tomas_SISC_2019}, we
estimate the local curvature of the density by
\begin{equation}
  \Delta^2 \varrho^n_i := \frac{\sum_{j\in\calI(i){\setminus} \{i\}}
    \beta_{ij}(\varrho^n_i - \varrho^n_j)}{\sum_{j\in\calI(i){\setminus} \{i\}}\beta_{ij}},\qquad
  \overline{\Delta^2 \varrho^n_i} := \frac{1}{2(\text{card}(\calI(i))-1)}
  \sum_{j\in\calI(i){\setminus} \{i\}} (\tfrac12
  \Delta^2\varrho^n_i + \tfrac12 \Delta^2 \varrho^n_j),
\end{equation}
where $\beta_{ij}=\int_\Dom \GRAD \varphi_j\SCAL\GRAD\varphi_i\diff x$
are the stiffness coefficients of the Laplace operator and we recall
that $\{\varphi_i\}_{i\in\calV}$ are the global shape shape functions.
We note in passing that
$\sum_{j\in\calI(i){\setminus} \{i\}}\beta_{ij} = - \beta_{ii} =
-\int_\Dom (\GRAD \varphi_i)^2\dx \ne 0$.  The relaxed local density
bounds are then defined as follows:
\begin{align}
  \overline{\varrho^{\min,n}_i}  &:= \max( \varrho^{\min,n}_i -
  \overline{\Delta^2 \varrho^n_i}, (1 - r_h) \varrho^{\min,n}_i),
  \\
  \overline{\varrho^{\max,n}_i}  &:=
     \min(\varrho^{\max,n}_i + \overline{\Delta^2 \varrho^n_i},
      \tfrac{(1+\gamma_i^{\min,n})\varrho^{\max,n}_i}{\gamma_i^{\min,n}-1+ 2b\varrho^{\max,n}_i},
      \, (1 + r_h) \varrho^{\max,n}_i).
\end{align}
where $r_h := (\frac{m_i}{|D|})^{\frac{1.5}{d}}$. The argumentation for the
presence of the terms involving $1-r_h$ and $1+r_h$ is given in Remark~4.15
in \citep[\S4.7]{Guermond_Nazarov_Popov_Tomas_SISC_2019}.

The actual limiting on the density guaranteeing that
$\overline{\varrho^{\min,n}_i} \le \varrho_i^{n+1} \le
\overline{\varrho^{\max,n}_i}$ is done as explained in
\citep[\S4.4]{Guermond_Nazarov_Popov_Tomas_SISC_2019}.

\subsection{Limiting on the surrogate entropy}
\label{Sec:convex_limiting}
After limiting the density, we limit the surrogate entropy. Recall
that we established in Theorem~\ref{Thm:surrogate_entropy} that
$\Psi^s_i(\bsfU_i\upLnp) \ge 0$. We want the limiting operation to
guarantee that $\Psi^s_i(\bsfU_i^{n+1}) \ge 0$ as well. This limiting
in turn implies a positive lower bound on the internal energy. In this
section we use the notation $\bv\eqq (\rho(\bv),\bbm(\bv),E(\bv))$ for
all $\bv\in\Real^{d+2}$; that is, $\rho(\bv)$ is the first component of $\bv$,
  $\bbm(\bv)\in \Real^d$ is composed of the components $2$ to $d+1$, and $E(\bv)$ is the last component.

The limiting on $\Psi_i^s$ is done with the convex limiting explained
in \citep[Sec.~4.2]{Guermond_Nazarov_Popov_Tomas_SISC_2019}.  Given
some state $\bsfP_{ij}\in \Real^{d+2}$,
$j\in\calI(i){\setminus}\{i\}$, and $\ell_0\in [0,1]$ so that
$0<\rho(\bsfU_i\upLnp+ \ell_0 \bsfP_{ij})<b$, one has to find the
largest $\ell$ in $[0,\ell_0]$ so that
$\Psi_i^s(\bsfU_i\upLnp+ \ell \bsfP_{ij})\ge 0$. Notice that
$\bsfU_i\upLnp+ \ell \bsfP_{ij} \in \calA(b)\eqq \{\bv\in
\Real^{d+2}\st 0<\rho(\bv)<b^{-1}\}$ for all $\ell\in [0,\ell_0]$
because $\calA(b)$ is convex. This implies that
$\Psi_i^s(\bsfU_i\upLnp+ \ell \bsfP_{ij})$ is well-defined for all
$\ell\in [0,\ell_0]$.
One sets $\ell=\ell_0$ if
$\Psi_i^s(\bsfU_i\upLnp+ \ell_0\bsfP_{ij})\ge 0$.  Otherwise, setting
$g(\ell)\eqq \Psi_i^s(\bsfU_i\upLnp+ \ell \bsfP_{ij})$, one solves the
equation $g(\ell)=0$. This equation has at least one solution because
$\Psi_i^s$ is continuous.  The solution set is connected because
$\Psi_i^s$ is concave.  The solution set is a singleton if $\Psi_i^s$
is strictly concave.

\begin{lemma}[Internal energy] Let $\bsfU^{n+1}$ be the final
    stage obtained after limiting on the density and the surrogate
    entropy. Then the specific internal energy of this state satisfies the following lower
    bound for all $i\in\calV$:
  \begin{equation}
    \sfe_i^{n+1} \ge S_i^{\min,n} \big(\tfrac{1}{\varrho_i^n}-b\big)^{1-\gamma_i^{\min,n}}\ge 0.
    \end{equation}
  \end{lemma}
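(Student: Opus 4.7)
The plan is to read this as a direct consequence of the convex limiting construction in \S\ref{Sec:convex_limiting}. By construction, the final state $\bsfU_i^{n+1}$ is produced in two stages: first the density is limited so that $0<\varrho_i^{n+1}<\tfrac{1}{b}$, and then the linear increment leading to $\bsfU_i^{n+1}$ is limited so that the concave surrogate $\Psi_i^s$ remains nonnegative at the new state. Hence the inequality $\Psi_i^s(\bsfU_i^{n+1})\ge 0$ is available for free; all that remains is an algebraic rearrangement.

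First I would unpack this inequality using the definition \eqref{surrogate_entropy_functional}:
\begin{equation*}
\varrho_i^{n+1}\,\sfe_i^{n+1}\;\ge\; S_i^{\min,n}\,(\varrho_i^{n+1})^{\gamma_i^{\min,n}}\,(1-b\varrho_i^{n+1})^{1-\gamma_i^{\min,n}}.
\end{equation*}
Since $\varrho_i^{n+1}>0$ by the density limiting, division is legitimate and yields
\begin{equation*}
\sfe_i^{n+1}\;\ge\; S_i^{\min,n}\,(\varrho_i^{n+1})^{\gamma_i^{\min,n}-1}(1-b\varrho_i^{n+1})^{1-\gamma_i^{\min,n}}.
\end{equation*}
Then I would apply the pointwise identity $\rho^{\gamma-1}(1-b\rho)^{1-\gamma}=\bigl(\tfrac{1}{\rho}-b\bigr)^{1-\gamma}$, valid for $0<\rho<\tfrac{1}{b}$, to recast the right-hand side as $S_i^{\min,n}\bigl(\tfrac{1}{\varrho_i^{n+1}}-b\bigr)^{1-\gamma_i^{\min,n}}$, which matches the claimed lower bound (reading $\varrho_i^n$ in the statement as $\varrho_i^{n+1}$, i.e.\ the density of the limited state).

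To close with the nonnegativity, I would observe that $S_i^{\min,n}\ge 0$ since it is a minimum of values of $S(\bu,\gamma)=\tfrac{\rho e(\bu)}{\rho^\gamma}(1-b\rho)^{\gamma-1}$ evaluated at admissible states in $\calB(b)$, each of which has $\rho>0$, $e(\bu)>0$ and $1-b\rho>0$; and that $\bigl(\tfrac{1}{\varrho_i^{n+1}}-b\bigr)^{1-\gamma_i^{\min,n}}$ is positive for the same reason.

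The only point requiring a little care—rather than being a true obstacle—is verifying that the density-limiting stage indeed delivers $0<\varrho_i^{n+1}<\tfrac{1}{b}$ so that the algebraic identity used above is well defined on both sides. That bound is guaranteed by Lemma~\ref{Lem:density_upper_bound} together with the relaxed upper bound $\overline{\varrho^{\max,n}_i}\le\tfrac{(1+\gamma_i^{\min,n})\varrho_i^{\max,n}}{\gamma_i^{\min,n}-1+2b\varrho_i^{\max,n}}<\tfrac{1}{b}$, so no additional argument is needed. Everything else is the linear-algebra rewriting above.
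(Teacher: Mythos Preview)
Your argument is correct and is precisely the intended one: the paper gives no explicit proof of this lemma, treating it as an immediate consequence of the constraint $\Psi_i^s(\bsfU_i^{n+1})\ge 0$ enforced by the convex limiting, which is exactly the inequality you unpack. Your observation that the density appearing on the right-hand side should be $\varrho_i^{n+1}$ rather than $\varrho_i^{n}$ is also correct; this is a typo in the paper, since nothing in the construction forces $\varrho_i^{n+1}\ge\varrho_i^{n}$, which is what would be needed to pass from one to the other.
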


\begin{remark}[Entropy for the Covolume EOS]
  If the oracle coincides with the covolume equation of state, then
  limiting the entropy surrogate is equivalent to limiting the
  physical entropy.
\end{remark}

We now propose two ways to find $\ell^*$ so that $g(\ell^*)=0$
assuming that this root is unique. Both methods are iterative and are
guaranteed to return an answer $\tilde\ell$ that is such that
$\tilde\ell\uparrow \ell^*$ (hence $g(\tilde\ell)\ge 0$ for every
termination criteria).
The first method consists of proceeding as described in full details in
Section~7.5.4 in \cite{Guermond_Popov_Tomas_CMAME_2019} (see also the end
of Section~4.6 in \citep{Guermond_Nazarov_Popov_Tomas_SISC_2019}). The line
search is based on the Newton-Secant method. It uses the secant method on
the left of $\ell^*$ and the Newton method of the right. The convergence
rate is between $1$ and $2$.
The second method is based on the quadratic Newton method and its
convergence rate is cubic. Instead of solving
$\Psi_i^s(\bsfU_i\upLnp+ \ell\bsfP_{ij})= 0$ one defines
$\Phi_i^s(\bv)\eqq \rho(\bv) \Psi^s_i(\bv)$ and
solves
$\Phi_i^s(\bsfU_i\upLnp+\ell\bsfP_{ij})= 0$.  The solution sets of
the two equations $\Psi_i^s(\bsfU_i\upLnp+ \ell\bsfP_{ij})= 0$ and
$\Phi_i^s(\bv)\eqq \rho(\bv) \Psi^s_i(\bv)$
are identical since we assumed
that $0<\rho(\bsfU_i\upLnp+ \ell\bsfP_{ij})$ for all
$\ell\in[0,\ell_0]$. A key to the method is the following result.
\begin{lemma} \label{Lem:psi_is_concave} Let
  $\bu\in\calA(b)\eqq \{\bv\in \Real^{d+2}\st 0<\rho(\bv)<b^{-1}\}$
  and $\bp\in\Real^{d+2}$. Assume that $\bu+\ell_0\bp\in\calA(b)$. Let
  $f:[0,\ell_0] \ni \ell \mapsto f(\ell)\eqq \Phi_i^s(\bu + \ell
  \bp)$. The sign of $f'''(l)$ is constant over $[0,\ell_0]$
(\ie  $f''(l)$ is monotone  over $[0,\ell_0]$).
\end{lemma}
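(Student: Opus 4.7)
The plan is to reduce the assertion to a one-dimensional computation involving the scalar function $\phi(\rho) \eqq \rho^{1+\gamma}(1-b\rho)^{1-\gamma}$, where $\gamma\eqq\gamma_i^{\min,n}$. First I expand
\begin{equation*}
\Phi_i^s(\bv) \;=\; \rho(\bv)\,E(\bv) - \tfrac12 \|\bbm(\bv)\|_{\ell^2}^2 - S_i^{\min,n}\,\phi\bigl(\rho(\bv)\bigr),
\end{equation*}
and recall that along the ray $\bv = \bu + \ell\bp$, the conservative variables $\rho$, $\bbm$, $E$ are all affine in $\ell$. Hence $\rho E$ and $\|\bbm\|_{\ell^2}^2$ are polynomials of degree at most two in $\ell$, and their contribution to $f'''$ vanishes. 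Writing $\rho(\ell) \eqq \rho(\bu) + \ell\,\rho(\bp)$ and using the convexity of $\calA(b)$, together with $\bu,\,\bu+\ell_0\bp\in\calA(b)$, to guarantee that $\rho(\ell)\in(0,1/b)$ for every $\ell\in[0,\ell_0]$, one obtains the clean reduction
\begin{equation*}
f'''(\ell) \;=\; -\,S_i^{\min,n}\,\rho(\bp)^3\,\phi'''\!\bigl(\rho(\ell)\bigr).
\end{equation*}

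The core of the argument is then the explicit evaluation of $\phi'''$. My approach is repeated application of the product and chain rules, simplifying at each stage using $(1+\gamma)+(1-\gamma) = 2$. This yields at once $\phi'(\rho) = \rho^{\gamma}(1-b\rho)^{-\gamma}\bigl[(1+\gamma)-2b\rho\bigr]$, and after two further differentiations the polynomial prefactor in $b\rho$ collapses to a single constant, leaving the clean identity
\begin{equation*}
\phi'''(\rho) \;=\; \gamma(\gamma-1)(\gamma+1)\,\rho^{\gamma-2}(1-b\rho)^{-\gamma-2}.
\end{equation*}
A slightly more compact route that encodes the same cancellation is to obtain by logarithmic differentiation the second-order linear ODE $\rho(1-b\rho)\phi'' = \gamma\,\phi' - 2b\,\phi$, and then to differentiate it once more, substituting the explicit form of $\phi'$ and $\phi''$. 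Either way, for $\gamma\ge 1$ and $\rho\in(0,1/b)$ the resulting quantity is manifestly non-negative, and it vanishes identically when $\gamma=1$.

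Combining the two displays, $f'''(\ell)$ factors as a product whose sign is constant on $[0,\ell_0]$: $S_i^{\min,n}> 0$ by admissibility of the states; the cube $\rho(\bp)^3$ is $\ell$-independent; the constant $\gamma(\gamma^2-1)$ is non-negative; and $\rho(\ell)^{\gamma-2}(1-b\rho(\ell))^{-\gamma-2}$ is strictly positive on $[0,\ell_0]$. Thus $f'''$ has constant sign on the interval, which is equivalent to the monotonicity of $f''$. The only genuinely non-routine step is the algebraic collapse in the computation of $\phi'''$: naive differentiation produces competing polynomial terms of degrees one, two, and three in $b\rho$, and one must verify that they cancel pairwise to leave only the constant $\gamma(\gamma-1)(\gamma+1)$. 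This is the main obstacle, but it is purely mechanical.
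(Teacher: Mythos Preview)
Your proof is correct and follows essentially the same route as the paper: expand $\Phi_i^s(\bv)=\rho E-\tfrac12\|\bbm\|_{\ell^2}^2-c\,\rho^{\gamma+1}(1-b\rho)^{1-\gamma}$, note that the first two terms are quadratic in $\ell$ along the ray, and reduce to the identity $\partial_\rho^3\Phi_i^s(\bv)=-c\gamma(\gamma^2-1)\rho^{\gamma-2}(1-b\rho)^{-\gamma-2}$, whose sign is fixed on $(0,b^{-1})$. The paper's proof is terser (it simply states the third derivative), whereas you supply the intermediate differentiations and the alternative ODE route, but the argument is the same.
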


\begin{proof} By definition we have
  $\Phi_i^s(\bv) = \rho(\bv) E(\bv)-\frac12\|\bbm(\bv)\|_{\ell}^2 - c
  \rho(\bv)^{\gamma+1}(1-b\rho(\bv))^{1-\gamma}$, with $c\eqq
  S_{i}^{\min,n}$ and $\gamma\eqq \gamma_i^{\min,n}$. Notice that
  $\bu+\ell\bp\in\calA(b)$ for all $\ell\in[0,\ell_0]$ since $\calA(b)$ is
  convex; as a result, $f(\ell)$ is well-defined for all
  $\ell\in[0,\ell_0]$. A direct computation shows that
  \begin{align*}
    f'''(\ell) =(\rho(\bp))^3 \partial_{\rho}^3 \Phi_i^s(\bu + \ell
    \bp).
  \end{align*}
  But
  \begin{align*}
    \partial_{\rho}^3 \Phi_i^s(\bv) =
    -c\gamma(\gamma^2-1)\rho(\bv)^{\gamma-2}(1-b\rho(\bv))^{-\gamma-2}.
  \end{align*}
  Hence $f'''(\ell)$ has the same sign as $-\rho(\bp)$ for all
  $\ell\in[0,\ell_0]$.
\end{proof}

We now show how the quadratic Newton algorithm can be implemented to
estimate $\limiter^*$ from below. We initialize the iterative process by
setting $\limiter_L=0$ and $\limiter_R=\limiter_0$. Then, let $\limiter_L$
and $\limiter_R$ be the current left and right estimates of $\limiter^*$
with $0\le \limiter_L<\limiter^*<\limiter_R \le \limiter_0$. We now
construct a quadratic polynomial $P_L(\limiter)$ such that
$P_L(\limiter_L)=f(\limiter_L)$, $P_L(\limiter_R)= f(\limiter_R)$ and
$P_L'(\limiter_L)=f'(\limiter_L)$. We similarly define the quadratic
polynomial $P_R(\limiter)$ such that $P_R(\limiter_L)=f(\limiter_L)$,
$P_R(\limiter_R)= f(\limiter_R)$ and $P_L'(\limiter_R)=f'(\limiter_R)$.
Using the divided difference notation we have:
\begin{subequations}
  \begin{align}
    P_L(\limiter) &:= f(\limiter_L) +
    f[\limiter_L,\limiter_L](\limiter-\limiter_L) +
    f[\limiter_L,\limiter_L,\limiter_R](\limiter-\limiter_L)^2,
    \\
    P_R(\limiter) &:= f(\limiter_R) +
    f[\limiter_R,\limiter_R](\limiter-\limiter_R) +
    f[\limiter_L,\limiter_R,\limiter_R](\limiter-\limiter_R)^2.
  \end{align}
\end{subequations}
\begin{lemma}
  \label{Lem:roots}
  The following holds true:
  \begin{enumerate}[font=\upshape,label=(\roman*)]
    \item
      The polynomials $P_L(\limiter)$ and $P_R(\limiter)$ bound the
      function $f(\limiter)$ in the following sense:
      \begin{equation}
        \min\!\big(P_L(\limiter),P_R(\limiter)\big)
        < f(\limiter)<
        \max\!\big(P_L(\limiter),P_R(\limiter)\big),
        \qquad \forall \limiter\in (\limiter_L,\limiter_R).
        \label{Eq:quad_zeros}
      \end{equation}
    \item
      $P_L(\limiter)$ and $P_R(\limiter)$ have each a unique zero over the
      interval $(\limiter_L,\limiter_R)$ respectively given by
      \begin{subequations}\label{roots}
        \begin{align}
          \limiter^L(\limiter_L,\limiter_R) &\eqq \limiter_L -
          \frac{2f(\limiter_L)}{f'(\limiter_L)+\sqrt{f'(\limiter_L)^2
          -4f(\limiter_L) f[\limiter_L,\limiter_L,\limiter_R]}},
          \label{roots_a}
          \\
          \limiter^R(\limiter_L,\limiter_R)&\eqq \limiter_R -
          \frac{2f(\limiter_R)}{f'(\limiter_R)+\sqrt{f'(\limiter_R)^2
          -4f(\limiter_R) f[\limiter_L,\limiter_R,\limiter_R]}}.
          \label{roots_b}
        \end{align}
      \end{subequations}
    \item
      Properties (i) and (ii) imply that
      \begin{equation}
        \min\!\big(\limiter^L(\limiter_L,\limiter_R),\limiter^R(\limiter_L,\limiter_R)\big)
        < \limiter^*<
        \max\!\big(\limiter^L(\limiter_L,\limiter_R),\limiter^R(\limiter_L,\limiter_R)\big).
        \label{Eq:quad_zeros_bound}
      \end{equation}
  \end{enumerate}
\end{lemma}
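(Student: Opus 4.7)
The plan is to recognize $P_L$ and $P_R$ as Hermite interpolants of $f$, leverage the constant sign of $f'''$ established in Lemma~\ref{Lem:psi_is_concave}, and then deduce each of the three claims by elementary arguments. The crux is promoting a pointwise mean-value form of the interpolation remainder to a uniform sign statement on the whole interval $(\limiter_L,\limiter_R)$; once this is in hand, everything else reduces to the quadratic formula and an intermediate value argument.

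For (i), I would observe that $P_L$ is the Hermite interpolant of $f$ with double node at $\limiter_L$ and simple node at $\limiter_R$, while $P_R$ is the symmetric interpolant with simple node at $\limiter_L$ and double node at $\limiter_R$. Adjoining $\limiter$ as a fourth interpolation node and invoking the mean value theorem for divided differences yields
\begin{equation*}
f(\limiter)-P_L(\limiter)=\tfrac{f'''(\xi_L)}{6}(\limiter-\limiter_L)^2(\limiter-\limiter_R),\quad
f(\limiter)-P_R(\limiter)=\tfrac{f'''(\xi_R)}{6}(\limiter-\limiter_L)(\limiter-\limiter_R)^2,
\end{equation*}
for some $\xi_L,\xi_R\in(\limiter_L,\limiter_R)$. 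By Lemma~\ref{Lem:psi_is_concave} the sign of $f'''$ is constant on $[0,\limiter_0]$; since $(\limiter-\limiter_L)^2(\limiter-\limiter_R)<0$ and $(\limiter-\limiter_L)(\limiter-\limiter_R)^2>0$ throughout $(\limiter_L,\limiter_R)$, the quantities $f-P_L$ and $f-P_R$ carry opposite constant signs there, which is exactly \eqref{Eq:quad_zeros}.

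For (ii), the bracketing invariant of the iteration (i.e.\ $\limiter_L<\limiter^*<\limiter_R$ together with uniqueness of $\limiter^*$) implies $f(\limiter_L)f(\limiter_R)<0$, and therefore $P_L(\limiter_L)P_L(\limiter_R)<0$ and $P_R(\limiter_L)P_R(\limiter_R)<0$. Each quadratic thus admits a zero in $(\limiter_L,\limiter_R)$ by the intermediate value theorem, and this zero is unique because a quadratic with opposite signs at the endpoints of an interval has exactly one root inside. The explicit formulas \eqref{roots_a}--\eqref{roots_b} then follow by applying the quadratic formula to the Newton-form representations of $P_L$ and $P_R$ and rewriting it in the equivalent conjugated form that is numerically stable and consistently singles out the root lying in $(\limiter_L,\limiter_R)$.

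For (iii), I would evaluate (i) at $\limiter=\limiter^L$ and $\limiter=\limiter^R$. Since $P_L(\limiter^L)=0$, $P_R(\limiter^R)=0$, and since $f-P_L$ and $f-P_R$ have opposite signs on $(\limiter_L,\limiter_R)$, the values $f(\limiter^L)$ and $f(\limiter^R)$ themselves must be of opposite signs. As $\limiter^*$ is the unique zero of $f$ in $(\limiter_L,\limiter_R)$ and $f$ changes sign there, having $f(\limiter^L)$ and $f(\limiter^R)$ on opposite sides of zero forces $\limiter^L$ and $\limiter^R$ to lie on opposite sides of $\limiter^*$, which is precisely \eqref{Eq:quad_zeros_bound}.
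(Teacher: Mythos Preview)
Your proposal is correct and follows essentially the same approach as the paper: the paper's proof of (i) also relies on the constant sign of $f'''$ from Lemma~\ref{Lem:psi_is_concave} to conclude that one of $P_L,P_R$ lies above $f$ and the other below (the paper simply cites \cite[Lem.~4.5]{Guermond_Popov_Fast_Riemann_2016} for the details you spell out via the Hermite remainder), and parts (ii)--(iii) are handled there just as tersely. One small point the paper mentions that you do not: the formulas \eqref{roots} remain valid in the degenerate case where one or both second-order divided differences vanish and the corresponding interpolant is linear; this is worth a sentence since it can occur when $\rho(\bp)=0$.
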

\begin{proof}
  (i) The argumentation is largely the same as in the proof Lemma~4.5 in
  \citep{Guermond_Popov_Fast_Riemann_2016} and relies on the sign of
  $f'''(\limiter)$ being constant over $(\limiter_L,\limiter_R)$, as
  established in Lemma~\ref{Lem:psi_is_concave}, which implies that one of
  the quadratic polynomials ($P_L(\limiter)$ or $P_R(\limiter)$) is above
  $f(\limiter)$ and the other one is below $f(\limiter)$ for $\limiter\in
  (\limiter_L,\limiter_R)$.
  (ii) Moreover, both polynomials have exactly one zero in the interval
  $(\limiter_L,\limiter_R)$ given by \eqref{roots} even in the degenerated
  case when one or both of them are linear functions.
\end{proof}
Lemma~\ref{Lem:roots} implies that
$\min\big(\limiter^L(\limiter_L,\limiter_R),\limiter^R(\limiter_L,\limiter_R)\big)$
is always a lower bound on the root $\limiter^*$. The quadratic Newton
algorithm consists of replacing $\limiter_L$ by
$\min\big(\limiter^L(\limiter_L,\limiter_R),\limiter^R(\limiter_L,\limiter_R)\big)$
and $\limiter_R$ by $\max\big(\limiter^L(\limiter_L,\limiter_R),
\limiter^R(\limiter_L,\limiter_R)\big)$ and looping until some threshold
criterion is reached. The convergence rate of this algorithm is cubic.


\section{Numerical illustrations}
\label{Sec:illustrations}
In this section we illustrate the proposed method with several benchmarks
and experiments.

\subsection{Preliminaries}
Two independent codes have been written to ascertain reproducibility. The
first one, henceforth referred to as the \texttt{TAMU} code, does not use
any particular software and is written in Fortran 95/2003. It is based on
Lagrange elements on simplices and is dimension-independent. The
\texttt{TAMU} code is used only for the one-dimensional tests. The second
code is based on continuous $\polQ_1$ finite elements on quadrangular
meshes and use the \texttt{Ryujin}~\citep{maier2021efficient,ryujin-2021-b}
software, a high-performance finite-element solver based on the
\texttt{deal.II}. The \texttt{Ryujin} code is used for all two-dimensional tests.

The time-stepping in both codes is done with three stage, third-order
strong stability preserving Runge-Kutta method, SSPRK(3,3). The time step
is defined by the expression $\dt = \frac12\text{CFL}\CROSS \min_{i\in
\calI} \frac{m_i}{\sum_{j\in\calI(i){\setminus}\{i\}}d_{ij}\upLn}$ with
$d_{ij}\upLn$ defined in \eqref{def_dij_n} and $\text{CFL}\in (0,1)$ is
fixed by the user. We refer the reader to~\citep[Sec.~4]{GKMPT_NS_2022} for
a discussion on the implementation of boundary conditions for the Euler
Equations. In this work, we have modified the non-reflecting boundary
conditions described in~\citep[Sec.~4.3.2]{GKMPT_NS_2022} to account for an
arbitrary of state; for brevity, we skip the discussion of the details of
said boundary conditions. All the computations involving dimensional
quantities are done in the \texttt{SI} unit system unless otherwise
specified.

The numerical Schlieren shown in the numerical illustrations is computed with the discrete version of~\citep[Eq.~(35)]{banks_2008} with $\beta = 15$; see the Supplemental Material of~\cite{Guermond_Popov_Tomas_CMAME_2019} for more details.

\subsection{Equations of state}
\label{eos_defs}
In this section we list the equations of state that we use in the numerical
illustrations below. In all the tests reported, the equations of state are
used as an oracle. We make no assumptions on the physical validity of the
equations of state and only require that they provide a positive pressure.

\subsubsection*{Noble-Abel equation of state} The caloric Noble-Abel (or
covolume) equation of state reads
\begin{equation}
  \label{covolume}
  p(\rho, e) \eqq (\gamma-1) \frac{\rho e }{1 - b\rho}.
\end{equation}

\subsubsection*{Van der Waals equation of state} The caloric Van der Waals
equation of state is given by
\begin{equation}
  \label{vdw_eqn} p(\rho, e)
  \eqq (\gamma-1) \frac{\rho e + a \rho^2}{1 - b\rho} - a
  \rho^2.
\end{equation}

\subsubsection*{Mie-Gruneisen with linear Hugoniot locus}
The Mie-Gruneisen equation of state with a linear Hugoniot locus as the
reference curve is defined by
\begin{subequations}
  \label{MG_eos}
  \begin{align}
    p(\rho, e) &\eqq p_{\text{ref}}(\rho) + \rho\Gamma(\rho)\big(e - e_{\text{ref}}(\rho)\big), \\
    \text{with}\quad   p_{\text{ref}}(\rho) &\eqq P_0 + \tilde{\rho}_0 c_0^2\frac{1 - \frac{\tilde{\rho}_0}{\rho}}{\Big(1 - s(1 - \frac{\tilde{\rho}_0}{\rho})\Big)^2}, \quad
    e_{\text{ref}}(\rho) \eqq e_0 + \frac{P_0 + p_{\text{ref}}(\rho)}{2\tilde{\rho}_0}(1 - \frac{\tilde{\rho}_0}{\rho}).
  \end{align}
\end{subequations}
We refer the reader to~\cite[Sec.~4.4]{Menikoff2007} for a discussion of
this particular equation of state and respective parameters. For
simplicity, we take $\Gamma(\rho)\eqq \Gamma_0$ and $P_0 = 0,\,e_0 = 0$.

\subsubsection*{Jones-Wilkins-Lee equation of state}
The pressure given by the Jones-Wilkins-Lee (JWL) equation of state is
defined as follows:
\begin{equation}
  \label{jwl_eqn}
  p(\rho, e) \eqq A\Big(1 -
  \frac{\omega}{R_1} \frac{\rho}{\rho_0} \Big) \exp\Big( -R_1
  \frac{\rho_0}{\rho} \Big) + B \Big(1 - \frac{\omega}{R_2}
  \frac{\rho}{\rho_0} \Big) \exp\Big( -R_2
  \frac{\rho_0}{\rho} \Big) + \omega \rho e.
\end{equation}
The JWL equation of state was first introduced
in~\cite[Eq.~(I.3)]{lee_1968}. We refer the reader
to~\cite{segletes2018examination} for a discussion of the various forms of
this equation of state seen in the literature. We note that~\eqref{jwl_eqn}
can be recast in ``Mie-Gruneisen'' form as follows:
\begin{align*}
    p(\rho, e) &\eqq p_{\text{ref}}(\rho) +\omega \rho \big( e - e_{\text{ref}}(\rho)\big), \\
  \text{with}\quad  p_{\text{ref}}(\rho) &\eqq A e^{-R_1\frac{\rho_0}{\rho}} + B e^{-R_2\frac{\rho_0}{\rho}},
 \quad e_{\text{ref}}(\rho) \eqq \frac{A}{R_1}\frac{\rho}{\rho_0}e^{-R_1\frac{\rho_0}{\rho}} + \frac{B}{R_2}\frac{\rho}{\rho_0}e^{-R_2\frac{\rho_0}{\rho}}.
\end{align*}

\subsection{Convergence tests}
We now verify the accuracy of the proposed method. We define a
consolidated error indicator at time $t$ by accumulating the relative
error in the $L^q$-norm ($q\in[1,\infty]$):
\begin{equation}
  \delta_q(t):=\frac{\|\rho_h(t)-\rho(t)\|_{q}}{\|\rho(t)\|_{q}}
  + \frac{\|\bbm_h(t)-\bbm(t)\|_{q}}{\|\bbm(t)\|_{q}} \\
  + \frac{\|E_h(t)-E(t)\|_{q}}{\|E(t)\|_{q}},
  \label{eq:delta}
\end{equation}
where $\rho(t),\,\bbm(t),\,E(t)$ are the exact states at time $t$, and
$\rho_h(t),\,\bbm_h(t),\,E_h(t)$ are the finite element approximations
at time $t$ for the respective conserved variables.

\subsubsection{1D -- Smooth traveling wave}%
We consider a one-dimensional test proposed
in~\citep[Sec.~5.2]{Guermond_Nazarov_Popov_Tomas_SISC_2019} consisting of a
smooth traveling wave. The goal of this test is to show that we achieve (at
least) second-order accuracy in space with any equation of state used for
the pressure oracle. The smooth traveling wave is an exact solution to the
Euler equations where the primitive variables are set as follows:
\begin{subequations}
  \label{1D_smooth_wave}
  \begin{align}
    \rho(x,t) &=
      \begin{cases}
          \rho_0 + 2^6(x_1 - x_0)^{-6}(x - v_0 t - x_0)^3 (x_1 - x + v_0 t)^3 \quad
          & \text{if } x_0 \leq x - v_0 t \leq x_1,
          \\
          \rho_0
          & \text{otherwise},
      \end{cases}\\
     v(x, t) &= v_0, \qquad
     p(x,t) = p_0,
  \end{align}
\end{subequations}
where $x_0$ and $x_1$ are arbitrary constants such that $x_0 < x_1$. Just
as in~\citep{Guermond_Nazarov_Popov_Tomas_SISC_2019}, we set the constants
to $x_0=0.1$ and $x_1=0.3$. Notice that by fixing the pressure to be
constant, the solution is independent of the equation of state. The
internal energy is initiated by using the respective equations of state
defined in \S\ref{eos_defs}. The constants $p_0,\,\rho_0,\,v_0$ are
chosen to accommodate for the material in question.

  \begin{table}[ht]
  \centering
  \begin{tabular}[b]{lcrcrcrcr}
      \toprule
      & \multicolumn{2}{c}{Ideal} & \multicolumn{2}{c}{VdW}
      & \multicolumn{2}{c}{JWL}   & \multicolumn{2}{c}{MG} \\
      \cmidrule(lr){2-3} \cmidrule(lr){4-5} \cmidrule(lr){6-7} \cmidrule(lr){8-9}
      $|\calV|$ & $\delta_\infty(T)$ & & $\delta_\infty(T)$ &
                & $\delta_\infty(T)$ & & $\delta_\infty(T)$ &
      \\[0.3em]
      101  & 1.94e-02 &   -- & 1.24e-01 &   -- & 7.93e-02 &   -- & 1.24e-05 &  --  \\
      201  & 4.03e-03 & 2.27 & 6.24e-03 & 4.30 & 2.53e-02 & 1.65 & 2.56e-06 & 2.28 \\
      401  & 7.91e-04 & 2.35 & 9.92e-04 & 2.65 & 3.61e-03 & 2.81 & 5.03e-07 & 2.35 \\
      801  & 1.44e-04 & 2.46 & 1.75e-04 & 2.51 & 1.31e-04 & 4.78 & 9.17e-08 & 2.46 \\
      1601 & 2.75e-05 & 2.39 & 3.29e-05 & 2.41 & 2.51e-05 & 2.38 & 1.75e-08 & 2.39 \\
      3201 & 5.18e-06 & 2.41 & 6.17e-06 & 2.41 & 4.73e-06 & 2.41 & 3.29e-09 & 2.41 \\
      6401 & 9.69e-07 & 2.42 & 1.16e-06 & 2.42 & 8.87e-07 & 2.42 & 6.22e-10 & 2.41 \\
      \bottomrule
  \end{tabular}
  \caption{%
    $\delta_\infty(T)$ error \eqref{eq:delta} and corresponding convergence
    rates for the one-dimensional smooth traveling wave problem with exact
    solution~\eqref{1D_smooth_wave} under uniform refinement of the
    interval $D=(0,1)$.}
  \label{tab:smooth_wave}
\end{table}

We consider the following four configurations (here, $T$ is the final time
of the simulation):
\begin{itemize}[noitemsep]
  \item
    Ideal EOS:
    \eqref{covolume} with $b =0$, $\gamma=1.4$, $\rho_0 = p_0 = v_0 =
    1$, $T =0.6$;
  \item
    Van der Waals EOS:
    \eqref{vdw_eqn} with $a = 1$, $b = 0.075$,  $\gamma=1.4$, $\rho_0 =
    p_0 = v_0 = 1$, $T =0.6$;
  \item
    Jones-Wilkins-Lee EOS:
    \eqref{jwl_eqn} with $A = 1$, $B = -1$, $R_1 = 2$,
    $R_2 = \omega = \rho_0 = p_0 = v_0 = 1$, $T =0.6$;
  \item
    Mie-Gruneisen EOS: \eqref{MG_eos} with $\tilde{\rho}_0 = 2790$, $c_0 = 5330$, $s = 1.34$, $\Gamma_0 = 2.00$,
    $\rho_0 = 3500$, $p_0 = \num{1e11}$, $v_0 = \num{1e4}$, $T =
    \num{6e-5}$.
  \end{itemize}

The tests are performed on uniform meshes with the domain $D = (0,1)$ using
the \texttt{TAMU} code. The first mesh is composed of 100 cells. The
other meshes are obtained by uniform refinement via bisection. We use
$\text{CFL}=0.1$ and set Dirichlet boundary conditions for all tests. We
report in Table~\ref{tab:smooth_wave} the quantity $\delta_\infty(T)$ and
the respective convergence rates for the equations of state used. We
observe that the convergence rate is greater than 2 with each EOS (this is
a well known super convergence effect observed on uniform meshes
\citep{FLD:FLD719,Guermond_pasquetti_2013,Thompson_2016}).

\subsubsection{2D -- Isentropic Vortex with Van der Waals EOS}
To demonstrate higher-order convergence in $\polR^2$, we consider a novel
exact solution of the Euler equations~\eqref{mass}--\eqref{total_energy}
using the Van der Waals equation of state~\eqref{vdw_eqn} with $b\eqq 0$.
The exact solution is a modified version of the isentropic vortex problem
with an ideal gas equation of state (see~\cite{YEE1999199}). For
completeness, a derivation of the solution is presented in
Appendix~\ref{appendix:isen_vortex_vdw}.

\begin{table}[ht]
  \centering
  \begin{tabular}[b]{lcrcrcr}
    \toprule
    $|\calV|$ & $\delta_1(T)$ & & $\delta_2(T)$ & & $\delta_\infty(T)$ &
    \\[0.3em]
    \cmidrule(lr){2-3} \cmidrule(lr){4-5} \cmidrule(lr){6-7}
    289     & 1.17e-01 &   -- & 2.01e-01 &  --  & 6.82e-01 &   -- \\
    1089    & 1.18e-02 & 3.46 & 2.65e-02 & 3.06 & 1.05e-01 & 2.82 \\
    4225    & 7.92e-04 & 3.98 & 1.96e-03 & 3.84 & 7.87e-03 & 3.82 \\
    16641   & 5.57e-05 & 3.87 & 1.32e-04 & 3.93 & 5.50e-04 & 3.88 \\
    66049   & 5.07e-06 & 3.48 & 1.20e-05 & 3.48 & 7.79e-05 & 2.83 \\
    263169  & 7.55e-07 & 2.76 & 2.25e-06 & 2.42 & 2.03e-05 & 1.95 \\
    1050625 & 1.64e-07 & 2.20 & 5.51e-07 & 2.04 & 5.52e-06 & 1.88 \\
    4198401 & 4.08e-08 & 2.01 & 1.38e-07 & 2.00 & 1.51e-06 & 1.87 \\
    \bottomrule
  \end{tabular}
  \caption{%
    Error quantity~\eqref{eq:delta} and convergence rates for the isentropic
    vortex problem with the Van der Waals equation of state. The exact solution
    is given by \eqref{isentropic_vortex}.}
    \label{tab:isentropic_vdw}
\end{table}

Recalling that $a$ and $\gamma$ are the parameters of the Van der Waals
equation of state~\eqref{vdw_eqn}, the isentropic vortex solution is given
by
\begin{subequations}
  \label{isentropic_vortex}
  \begin{align}
    \rho(\bx,t)
    & = \Big[ \frac{3C}{8a} - \frac{1}{2}\sqrt{\frac{9C^2}{16a^2}
      + \frac{2}{a}\big(F + \frac{1}{2r_0^2} \psi(\overline{\bx})^2 \big)} \Big]^2, \\
    \bv(\bx,t)  & = \bv_\infty + \psi(\overline{\bx})\big(-\bar{x}_2, \bar{x}_1\big), \\
    p(\bx,t) & = C(\gamma-1)\rho(\bx,t)^\gamma - a
    \rho(\bx,t)^2,
  \end{align}
\end{subequations}
where $\overline{\bx} := \bx - \bx^0 - \bv_\infty t = (\overline{x}_1,
\overline{x}_2)$, $C \eqq (p_\infty + a
\rho_\infty^2)/\rho_\infty^{3/2}$, $F \eqq -a\rho_\infty -
3p_\infty/\rho_\infty$. Here $\rho_\infty$ and $p_\infty$ are the density
and pressure in the far field, and
$\psi(\bx) := \frac{\beta}{2\pi} \exp\Big(\frac12 (1 - \frac{1}{r_0^2}
  \|\bx\|^2_{\ell^2})\Big).$
We set the far field conditions to $\rho_\infty = 0.1$, $p_\infty = 1$ and
$\bv_\infty = (1, 1)$. We also set $\gamma = \frac32$ and $a = 1$. This gives
$C = \frac{101}{\sqrt{10}}$ and $F= -\frac{301}{10}$. The rest of the constants
are set as follows:
$\bx^0 = (-1,-1)$, $r_0 = 1$, $\beta = 20$.
We perform the convergence tests using the \texttt{Ryujin} code. The
computational domain is set to $D = (-5,5)\CROSS(-5,5)$. We run the
simulations until the final time $T = 2$ with $\text{CFL} = 0.1$ and
Dirichlet boundary conditions. In Table~\ref{tab:isentropic_vdw}, we report
the $\delta_q(T)$ errors and the respective convergence rates for
$q\in\{1,2,\infty\}$. The tests are done over eight quadrilateral grids and
seven levels of uniform refinements. We observe second order convergence in
all the norms.

\subsection{Benchmark configurations}
In this section, we consider common
benchmarks seen in the literature and modify them appropriately using
different equations of state. Our goal is to compare the present method
with the state of the art in the literature.
\subsubsection{1D -- Woodward-Colella Blast Wave}
We demonstrate the robustness of the proposed method by reproducing
the Woodward-Colella interacting blast wave benchmark using the
Jones-Wilkins-Lee equation of state~\eqref{jwl_eqn}.
For this benchmark, we consider two different cases.
The first is that seen in~\cite[Sec.~5.1]{Toro_Castro_Lee_JCP_2015}
and the second consists of parameters found in~\cite[Tab.~2(``HMX'')]{lee_1968}.
The parameters for both cases are given in Table~\ref{tab:jwl_cases}.

\begin{table}[ht]
  \centering
  \begin{tabular}[b]{llllllll}
      \toprule
      & $A$ & $B$ & $R_1$ & $R_2$ & $\omega$ & $\rho_0$ & Final time $T$
      \\[0.3em]
      \textbf{Case 1} & \SI{6.321e3}{}   & \SI{-4.472}{}     & 11.3 & 1.13 & 0.8938 & \SI{1}{}    & \SI{0.038}{} \\
      \textbf{Case 2} & \SI{7.7828e11}{} & 7.071428$\times10^9$ &  4.2 & 1.00 & 0.3000 & \SI{1891}{} & \SI{8.2e-4}{}\\
      \bottomrule
    \end{tabular}
  \caption{JWL parameters for Woodward-Colella interacting blast wave benchmark.}
  \label{tab:jwl_cases}
\end{table}
\begin{figure}[ht]
     \centering
         \includegraphics[trim={50 0 50 13},clip,width=0.35\linewidth]{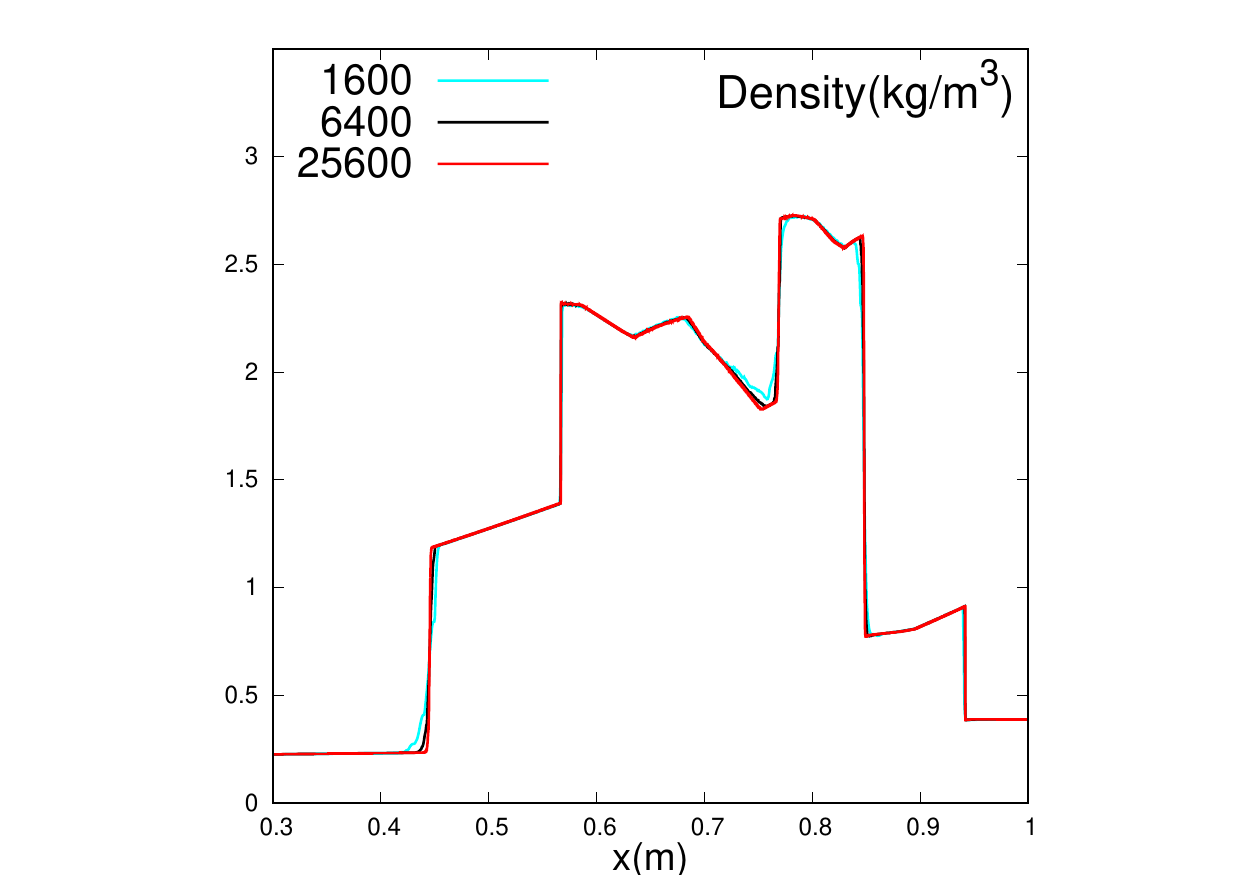}
         \includegraphics[trim={50 0 50 13},clip,width=0.35\linewidth]{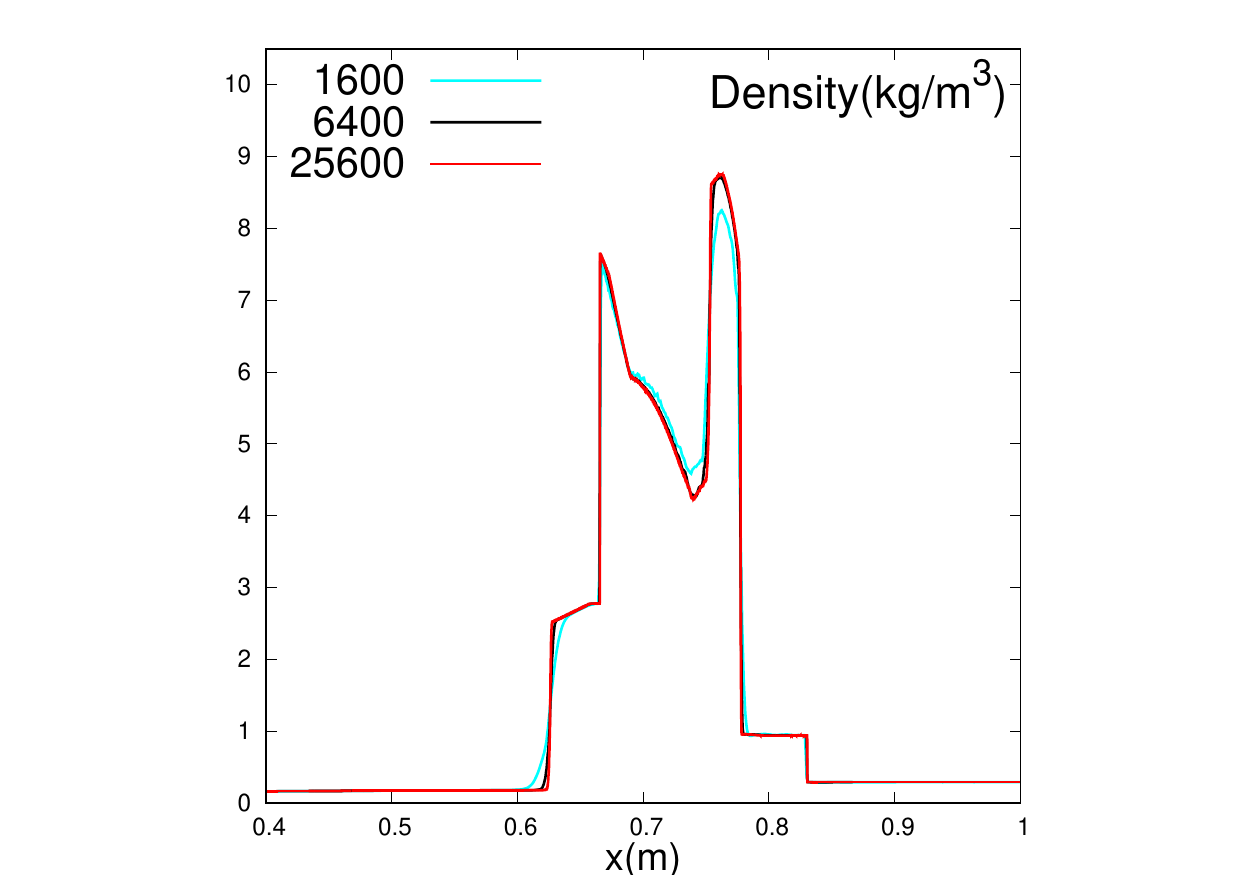}
        \caption{Density profile for the Woodward-Colella blast wave
          problem using the Jones-Wilkins-Lee equation of state.
        Left: Case~1 at $T=0.038$. Right: Case~2 at $T=\SI{8.2e-4}{}$.}
        \label{Fig:Woodward-Collela}
\end{figure}

The initial state for the blast wave problem is given as follows:
\begin{equation}
  (\rho_0(x), v_0(x), p_0(x)) = \begin{cases}
    (1, 0, 10^3)    & \quad \text{ if } x \in [0,0.1],   \\
    (1, 0, 10^{-2}) & \quad \text{ if } x \in (0.1,0.9), \\
    (1, 0, 10^2)    & \quad \text{ if } x \in [0.9,1].
  \end{cases}
\end{equation}
The simulations are performed on the domain $\Dom=(0,1)$ with the
\texttt{TAMU} code. All the tests use $\text{CFL}=0.9$ and slip boundary
conditions. We show in Figure~\ref{Fig:Woodward-Collela} the density
profiles for both cases at their respective final times  using three
different meshes composed of $1600$, $6400$, and $25600$ elements. The
results compare well with what is available in the literature; see \eg
\cite[Fig.~2]{Toro_Castro_Lee_JCP_2015} for Case~1.


\subsubsection{1D -- Riemann Problem with \texttt{SESAME} database}
We now demonstrate the method's ability to handle tabulated data as
the pressure oracle. In particular, we consider a Riemann problem
modeling the impact of a right-moving aluminum slab into a stationary
aluminum slab at high velocities. To simulate the material aluminum,
we use the Material ID \texttt{3720} in the~\texttt{SESAME}
database~\citep{lyon1992sesame}.  We let the density of the two
  aluminum slabs be $\rho_0 = \SI{3000}{kg~m^{-3}}$.  The pressure
  at this density and at room temperature ($\SI{293}{K}$) is
$p_0 = \SI{1.004489e10}{Pa}$ (this value is obtained from
the~\texttt{SESAME} database).
\begin{figure}[ht]
     \centering
         \includegraphics[trim={60 0 60 0},clip,width=0.32\linewidth]{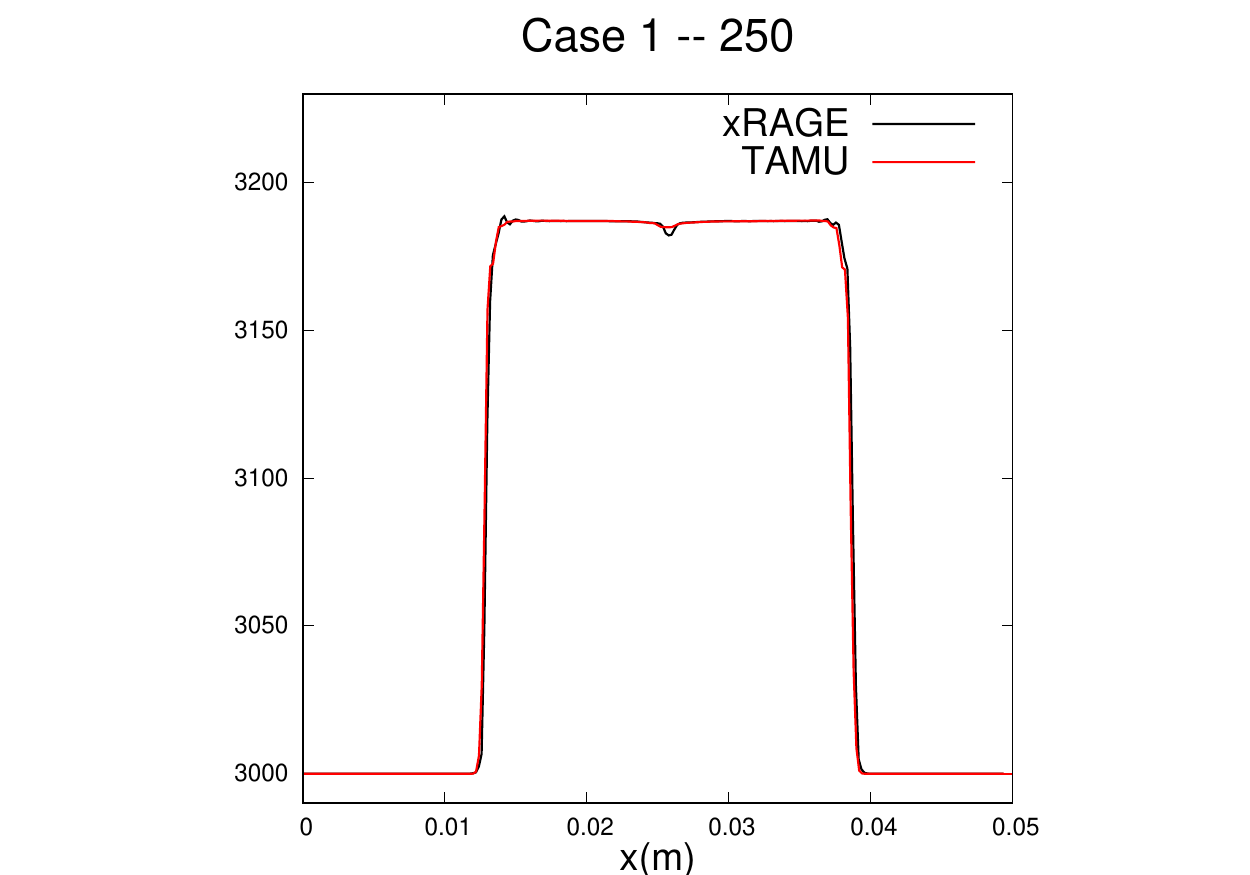}%
         \includegraphics[trim={60 0 60 0},clip,width=0.32\linewidth]{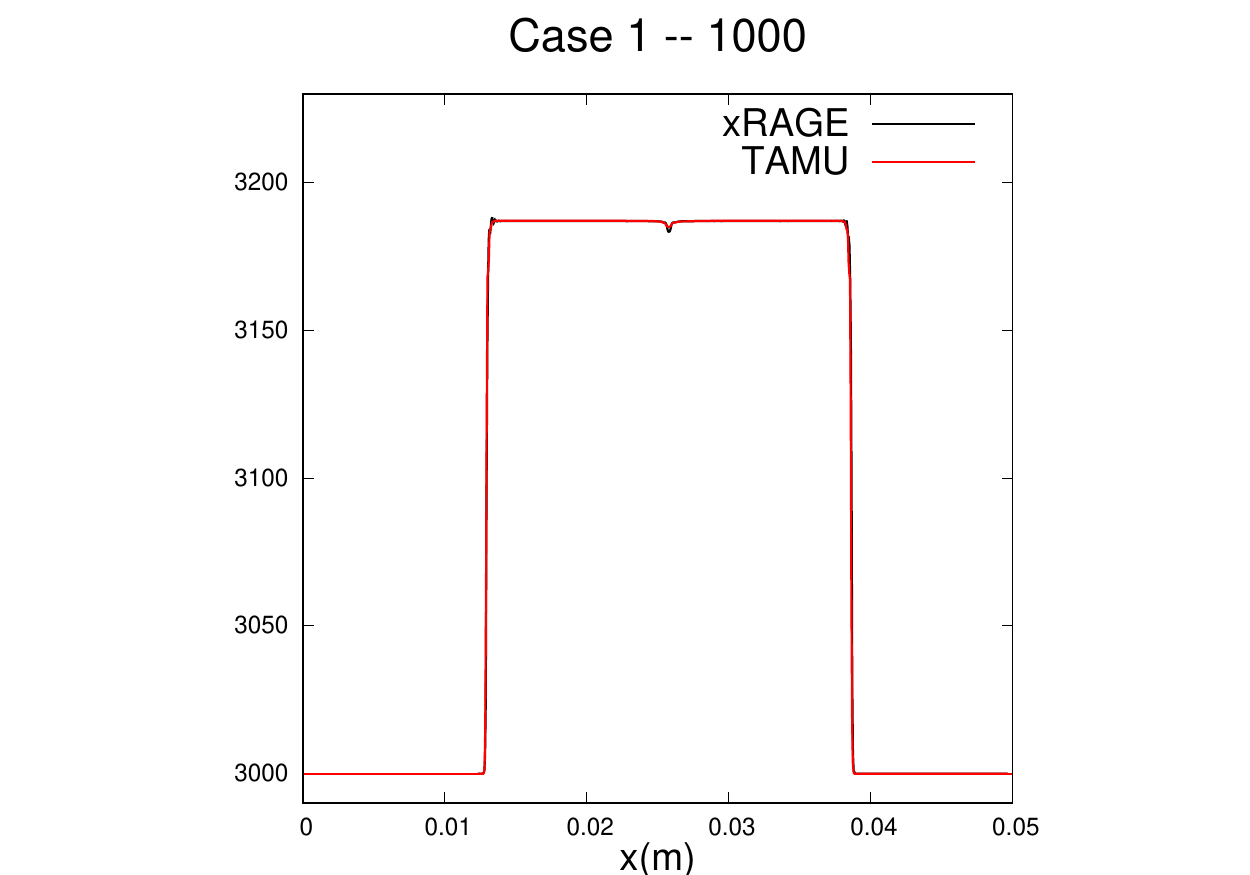}%
         \includegraphics[trim={60 0 60 0},clip,width=0.32\linewidth]{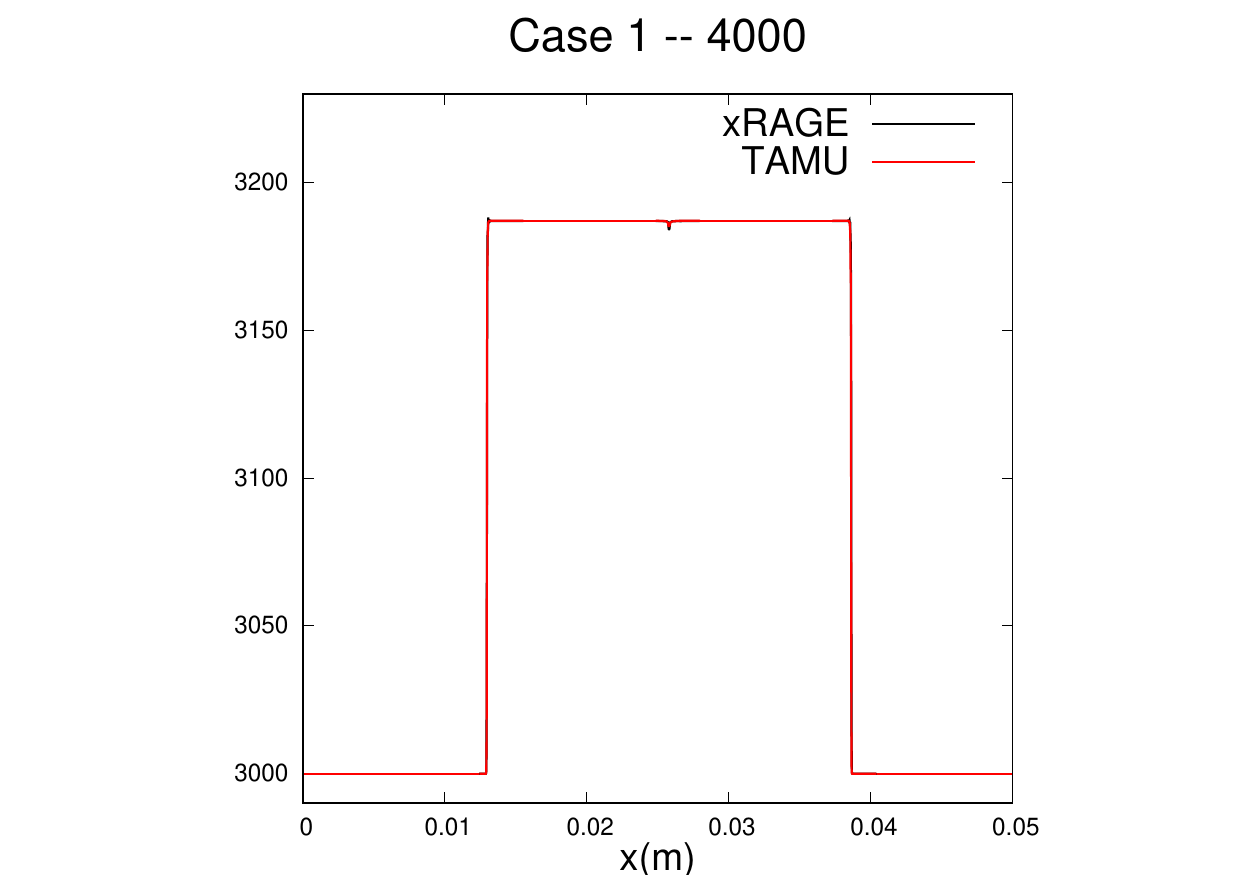}\par
         \includegraphics[trim={60 0 60 0},clip,width=0.32\linewidth]{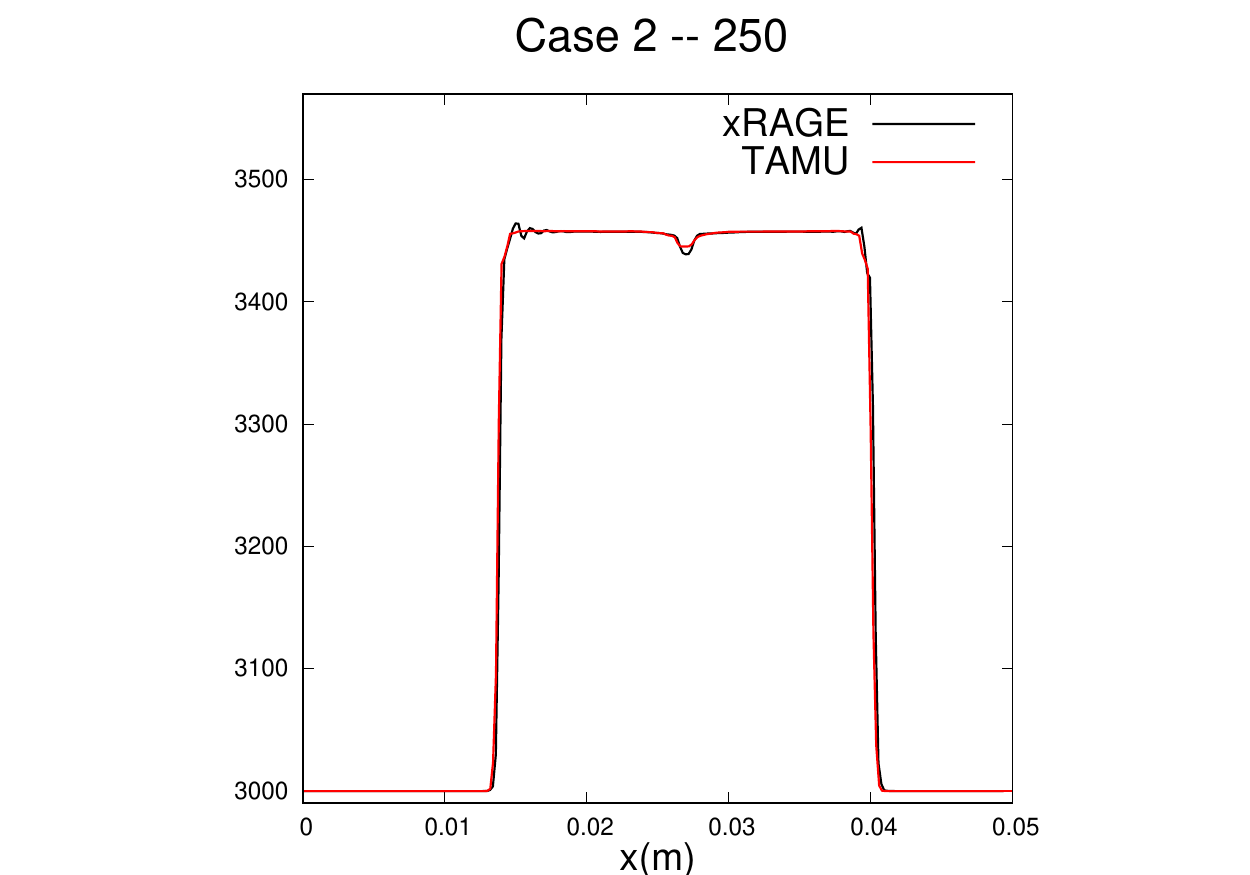}%
         \includegraphics[trim={60 0 60 0},clip,width=0.32\linewidth]{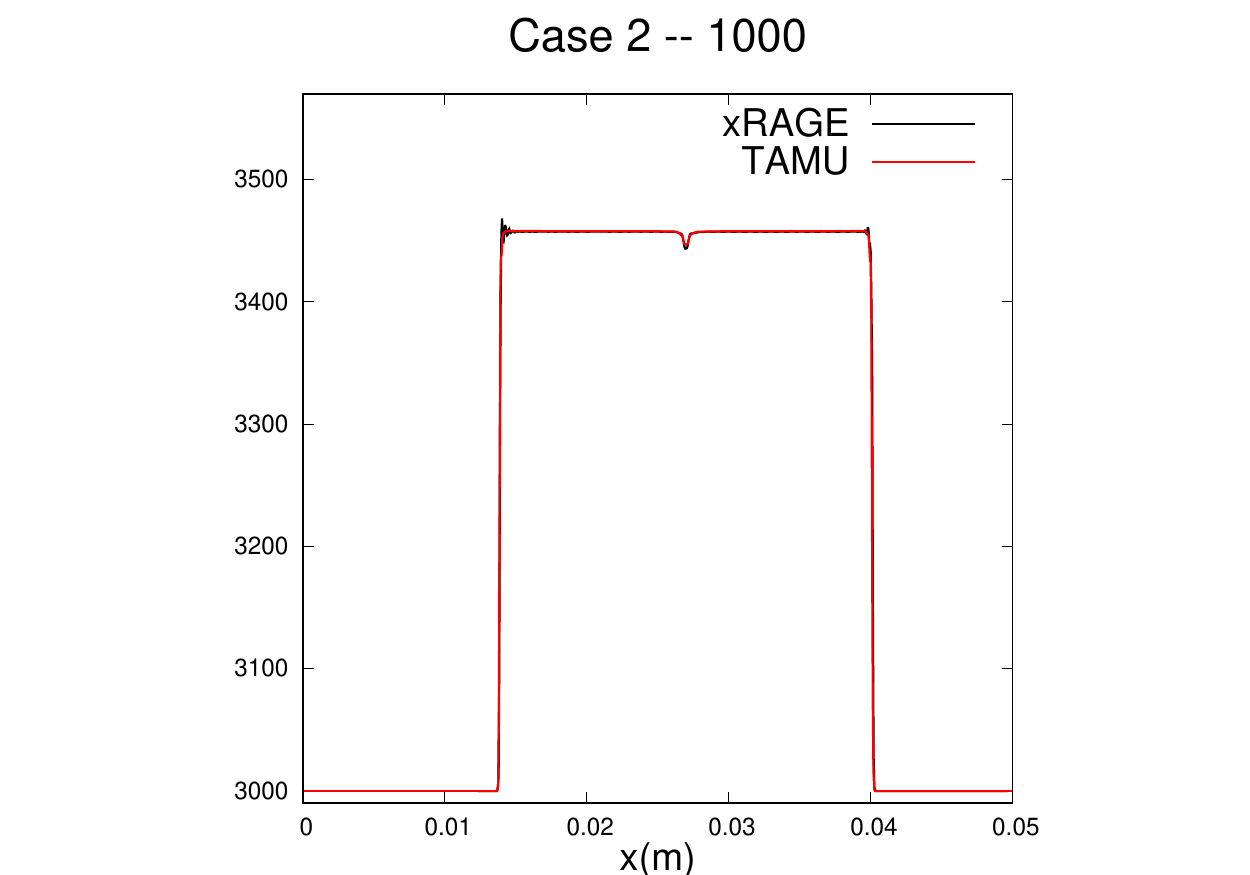}%
         \includegraphics[trim={60 0 60 0},clip,width=0.32\linewidth]{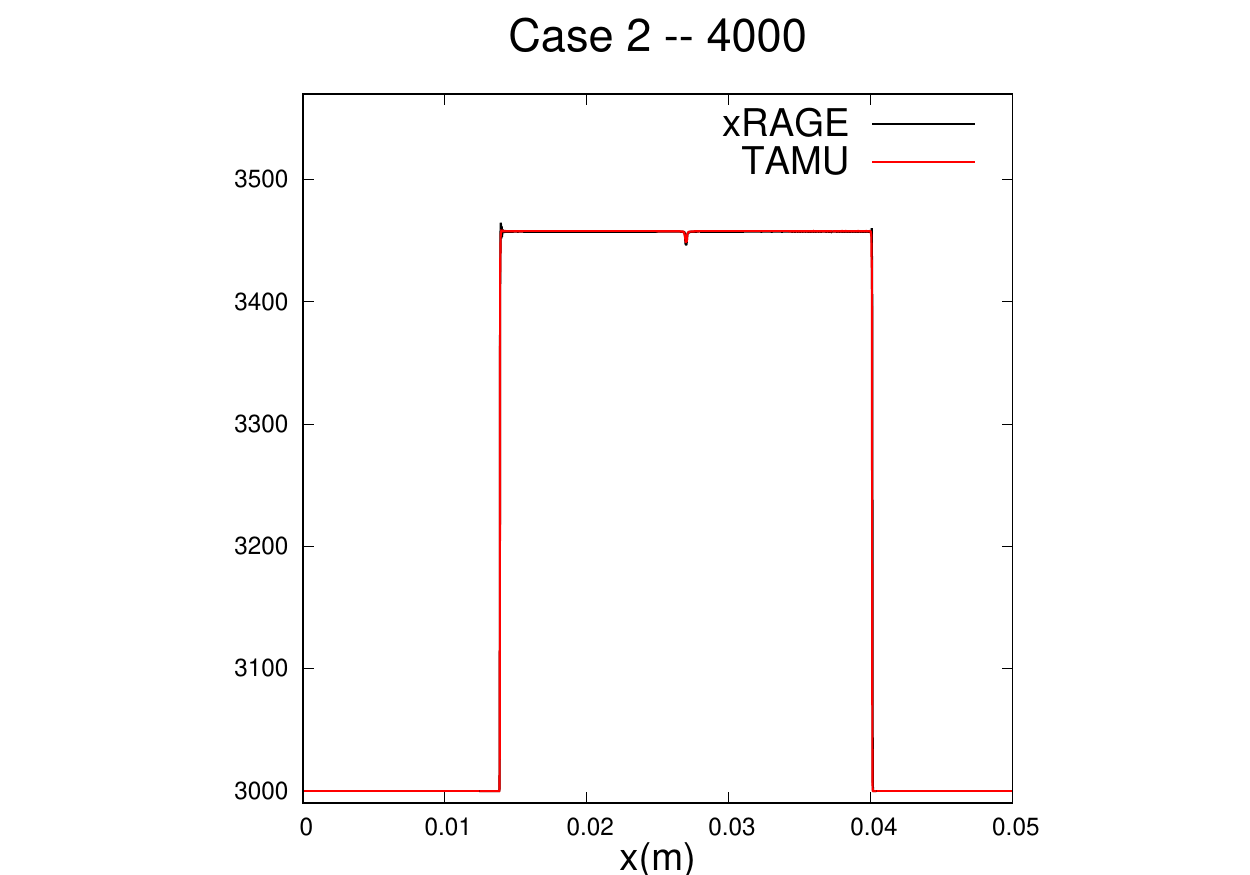}%
        \caption{Density comparison for the aluminum impact problem with the \texttt{TAMU} and \texttt{xRAGE} codes using the \texttt{SESAME} database.}
        \label{Fig:xRAGE_TAMU}
\end{figure}

The computational domain is set to $D = (0,\SI{0.05}{m})$ where the
two aluminum slabs are separated at $x = \SI{0.025}{m}$. We consider
two cases: we assume in the first case that the left slab initially
moves with velocity $\SI{800}{m~s^{-1}}$ (Case 1), and in the second
case we assume that the velocity is $\SI{2000}{m~s^{-1}}$ (Case
2). The simulations are run until final time $T = \SI{2e-6}{s}$ and
performed with 250, 1000 and 4000 mesh elements to show
convergence. We use $\text{CFL}=0.5$ and set Dirichlet conditions on
the left boundary and slip conditions on the right boundary. For
verification, we run the same configuration with the \texttt{xRAGE}
code developed at Los Alamos National Laboratory
(see:~\citep{Gittings_2008} and~\citep{grove_xrage_2019}). In
Figure~\ref{Fig:xRAGE_TAMU}, we show the density output comparison
between the two codes.We see very good agreement between the codes. This test clearly demonstrates the method's ability to handle tabulated data.

\subsection{2D -- Shock Collision with Triangular Obstacle}
We now reproduce a 2D problem proposed
in~\cite[Sec.~5.4]{Toro_Castro_Lee_JCP_2015} which investigates traveling
shock waves colliding with a triangular obstacle. This configuration is
commonly known in the literature as Schardin's
problem~\citep{schardin1957high}.  We refer the reader
to~\cite{chang2000shock} where the authors experimentally reproduced
Schardin's original experiments and give a detailed description of the
experiments.

This test is performed with the Van der Waals equation of
state~\eqref{vdw_eqn} with $\gamma = 864.7/577.8,\,a = 0.14,\, b =
\num{3.258e-5}$ and initialized as follows. The relative Mach speed is $M_S
= 1.3$ with the primitive state in front of the shock set to $\bu_R =
(1.225, 0, 0, 101325)\tr$. Using the Rankine-Hugoniot conditions to derive
the post shock state, the complete initial state is given as follows:
\begin{equation}
  \big(\rho_0(\bx), \bv_0(\bx), p_0(\bx)\big) = \begin{cases}
    (1.82039, 148.597, 0, 185145), & \quad \text{ if } x \leq -0.55, \\
    (1.225, 0, 0, 101325),         & \quad \text{ if } x > -0.55.
  \end{cases}
\end{equation}

The computations are performed with the \texttt{Ryuin} code. The
computational domain is defined as $D = (-0.65,0.5) \times (-0.5, 0.5)
{\setminus} K$ where $K$ is the triangle formed by the vertices
$(-0.2,0.0)$, $(0.1, 1/6)$, and $(0.1, -1/6)$. The simulations are run
until $T = \num{2.2e-3}$ with a CFL of $0.6$. The mesh is composed of
\SI[group-separator = {,}]{7347200}{} $\polQ_1$-nodes. Dirichlet conditions are imposed on the
left boundary, dynamic outflow conditions on the right boundary and slip
conditions on the rest of the boundaries.

We show in Figure~\ref{Fig:triangular_shock} the Schlieren output of the
shock wave interacting with the triangular obstacle at three time
snapshots $t=\{\num{1e-3},\num{1.6e-3},\num{2.2e-3}\}$. The results match
up well with the experimental photos shown in~\citep{chang2000shock}. In
particular, we see vortices developing along the slip layer behind
the back vertices of the triangle (see:~\citep[Fig.~8]{chang2000shock});
these so-called vortexlets are not apparent
in~\citep[Fig.~6]{Toro_Castro_Lee_JCP_2015} likely due to a lack of mesh
resolution.
\begin{figure}[ht]
  \centering
  \includegraphics[trim={5 0 5 0},clip,width=0.33\linewidth]{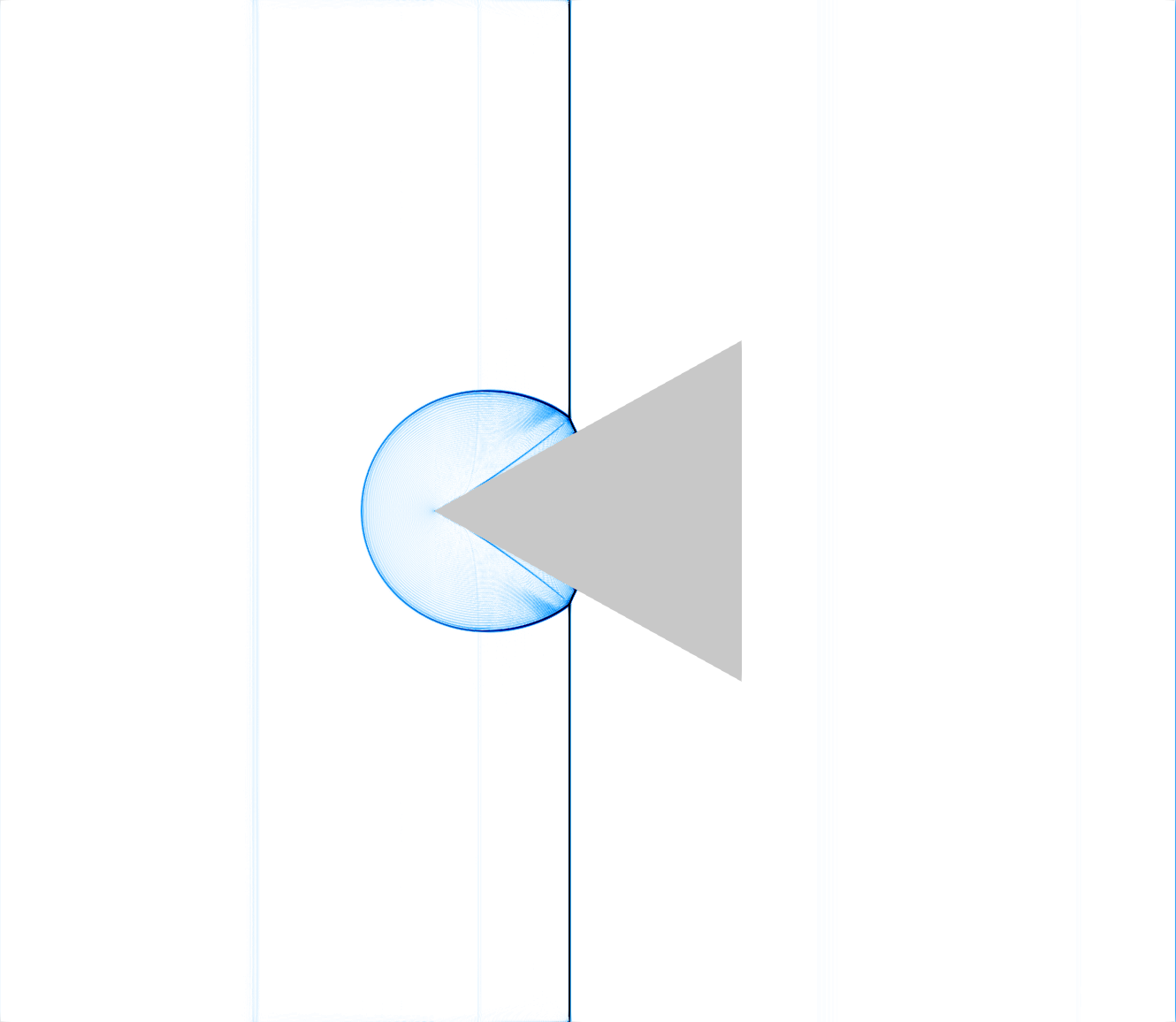}%
  \includegraphics[trim={5 0 5 0},clip,width=0.33\linewidth]{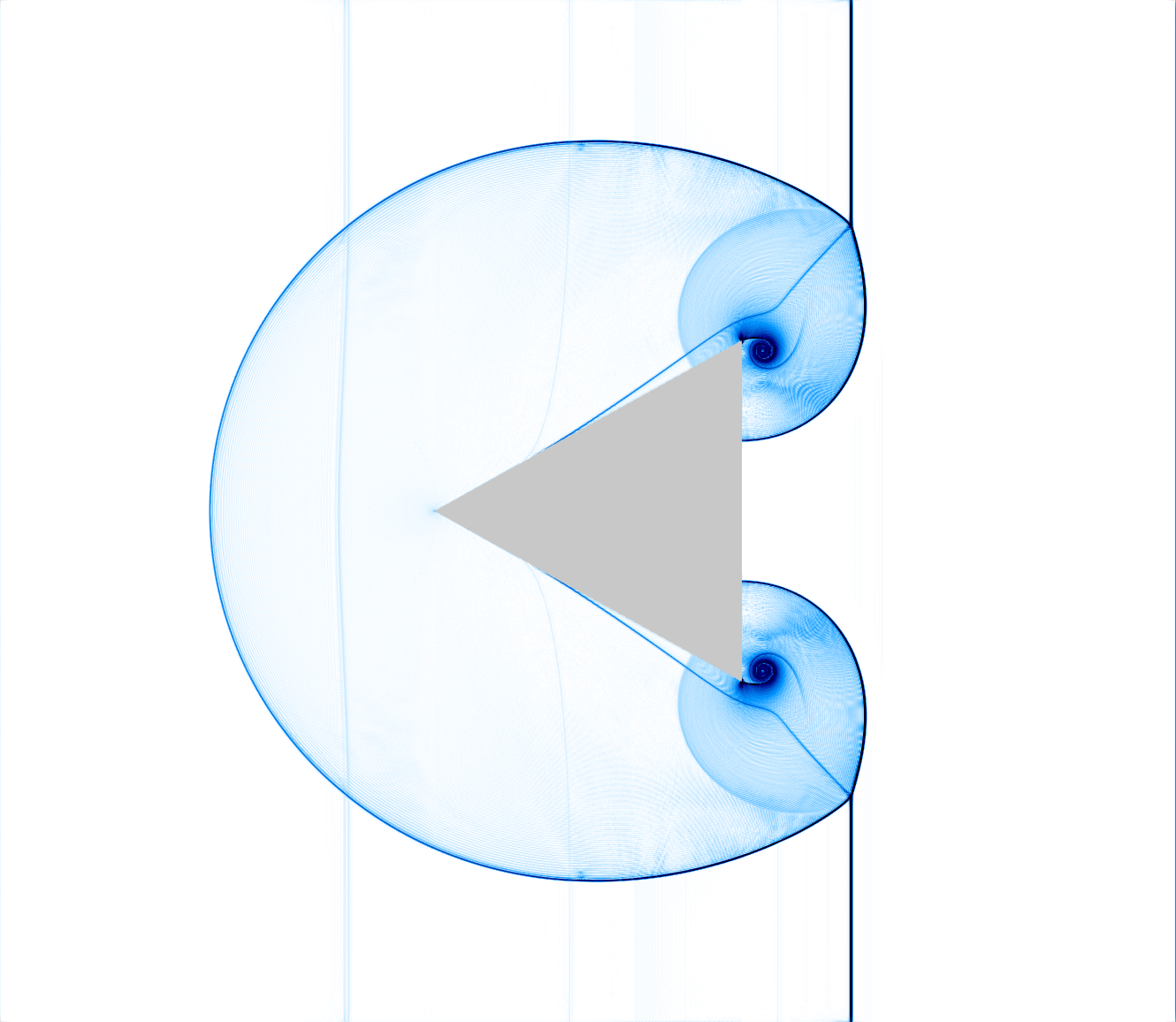}%
  \includegraphics[trim={5 0 5 0},clip,width=0.33\linewidth]{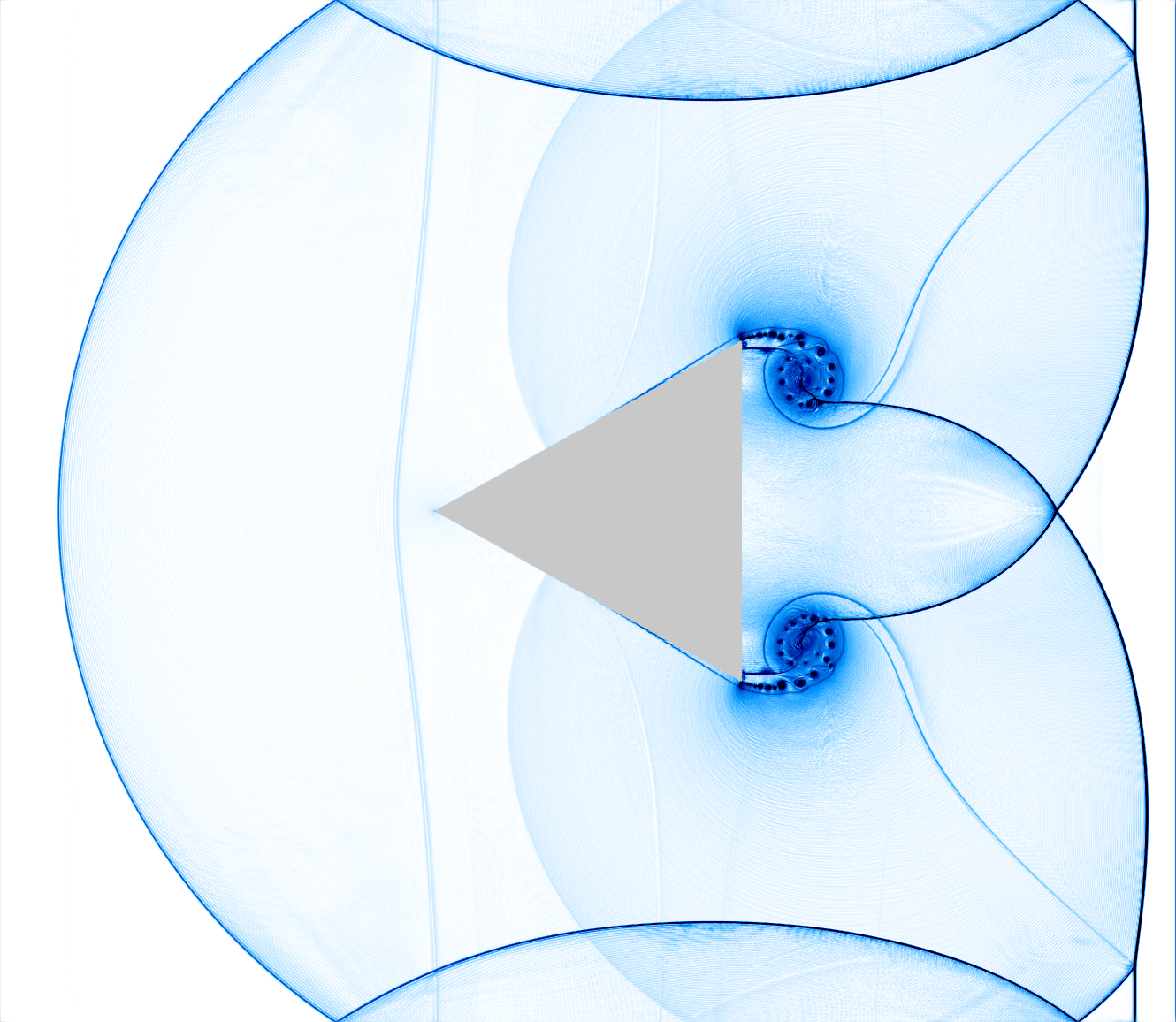}
  \caption{Shock wave interacting with triangular obstacle at $t=\{\num{1e-3},\num{1.6e-3},\num{2.2e-3}\}$.}
  \label{Fig:triangular_shock}
\end{figure}

\subsection{2D -- Shock Bubble Interaction}

In this section, we consider a single material shock-bubble
interaction benchmark proposed in~\cite[Sec~5.2.2]{wang2021stiffened}
using the Jones-Wilkins-Lee equation of state. For more details on
shock-bubble interaction problems, we refer to
\cite{haas_sturtevant_1987} for experimental results and to
\cite{quirk_karni_1996} for the description of the simulation
setup. This test demonstrates the robustness of our method by being
able to reproduce the complex interactions of the shock hitting the
bubble.

Let $\mathfrak{B}$ denote the bubble
centered at $(\SI{0.5}{},\SI{0.5}{})$ with radius $\SI{0.25}{}$. The
primitive states at the initial time for the ambient fluid and bubble are
respectively defined as follows:
\begin{align}
  (\rho_R, \bv_R, p_R) & = (\SI{1.0e3}{}, \SI{0}{}, \SI{5.0e10}{}), \\
  (\rho_\mathfrak{B}, \bv_\mathfrak{B}, p_\mathfrak{B}) & = (\SI{2.0e3}{}, \SI{0}{}, \SI{5.0e10}{}).
\end{align}
The pressure across the shock is prescribed to be $p_L = \SI{4.369e11}{}$,
and the remaining state variables, $\rho_L$ and $\bv_L$, are computed
using the Rankine-Hugoniot conditions.  Thus, the initial primitive state
for the problem is given by
\begin{equation}
  (\rho_0(\bx), \bv_0(\bx), p_0(\bx)) = \begin{cases}
    (3778.85, 16867.6, 0, 4.369\times 10^{11}), & \text{ if } x < 0.05,                                          \\
    (1000, 0, 0, 5\times 10^{10}),                            & \text{ if } x \geq 0.05 \text{ and } \bx \not\in \mathfrak{B}, \\
    (2000, 0, 0, 5\times 10^{10}),                              & \text{ if } \bx \in \mathfrak{B}.
\end{cases}
\end{equation}
We perform the tests with the \texttt{Ryujin} code. The parameters for the
JWL EOS~\eqref{jwl_eqn} are set to
\begin{align*}
  A = \SI{8.545e11}{},\,B = \SI{2.05e10}{},\,R_1 = 4.6,\,
  R_2 = 1.35,\,\omega = 0.25,\,\rho_0 = \SI{1.84e3}{}.
\end{align*}
The computational domain is set to $D=(0, \SI{3}{})\times(0, \SI{1}{})$ and
the mesh is composed of \SI[group-separator = {,}]{50348033}{} $\polQ_1$-nodes. The simulations
are performed until the final time $T = \SI{100}{\mu\s}$ with CFL$ = 0.5$.
Slip boundary conditions are applied on all boundaries. In
Figures~\ref{fig1:wang_li_schlieren_shock_bubble} to \ref{fig3:wang_li_schlieren_shock_bubble}, we show the Schlieren
plots of the density at times $t = \{\SI{40}{\mu\s}$, \SI{70}{\mu\s}$,
\SI{100}{\mu\s}\}$. The results compare well with Figures 5.6 and 5.7
shown in~\cite{wang2021stiffened}.
\begin{figure}[ht]
  \centering
  \includegraphics[scale=0.12]{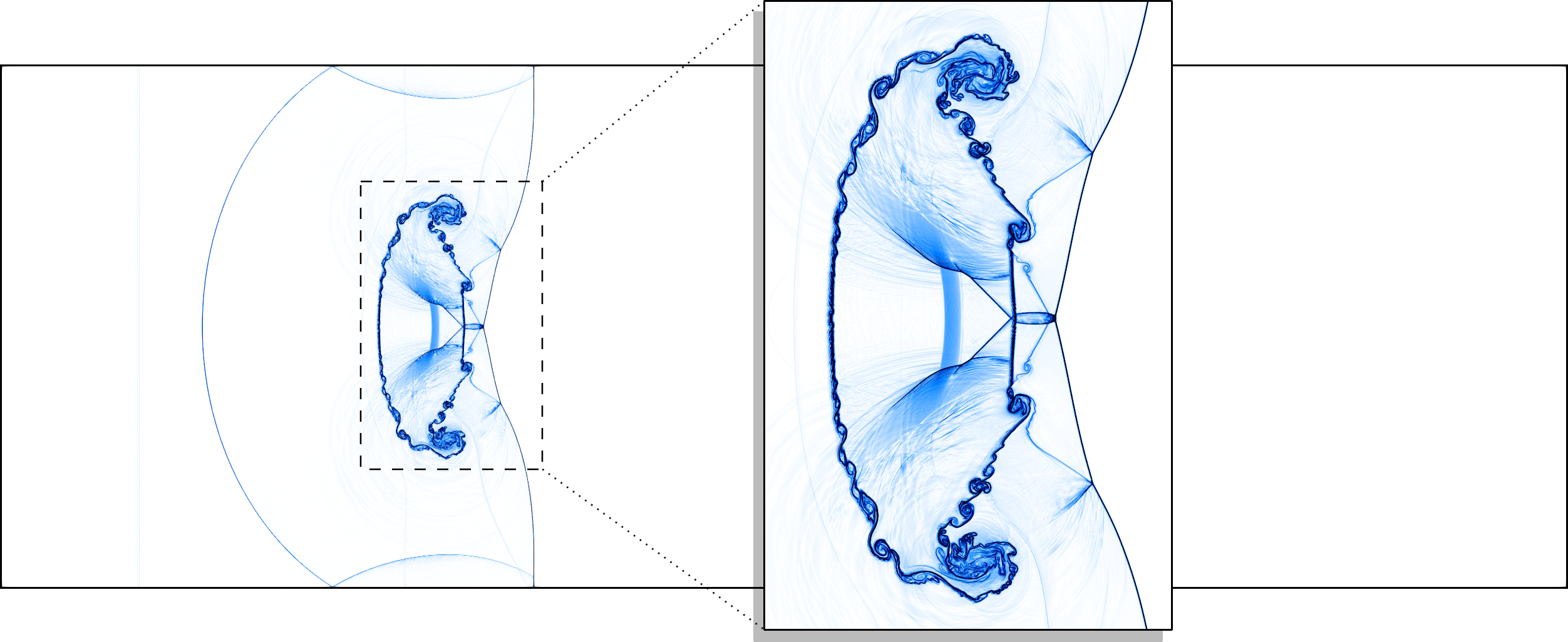}
  \caption{Schlieren plots for the shock-bubble interaction benchmark for
    $t = \SI{40}{\mu\s}$.}
  \label{fig1:wang_li_schlieren_shock_bubble}
\end{figure}
\begin{figure}[ht]
  \centering
  \includegraphics[scale=0.12]{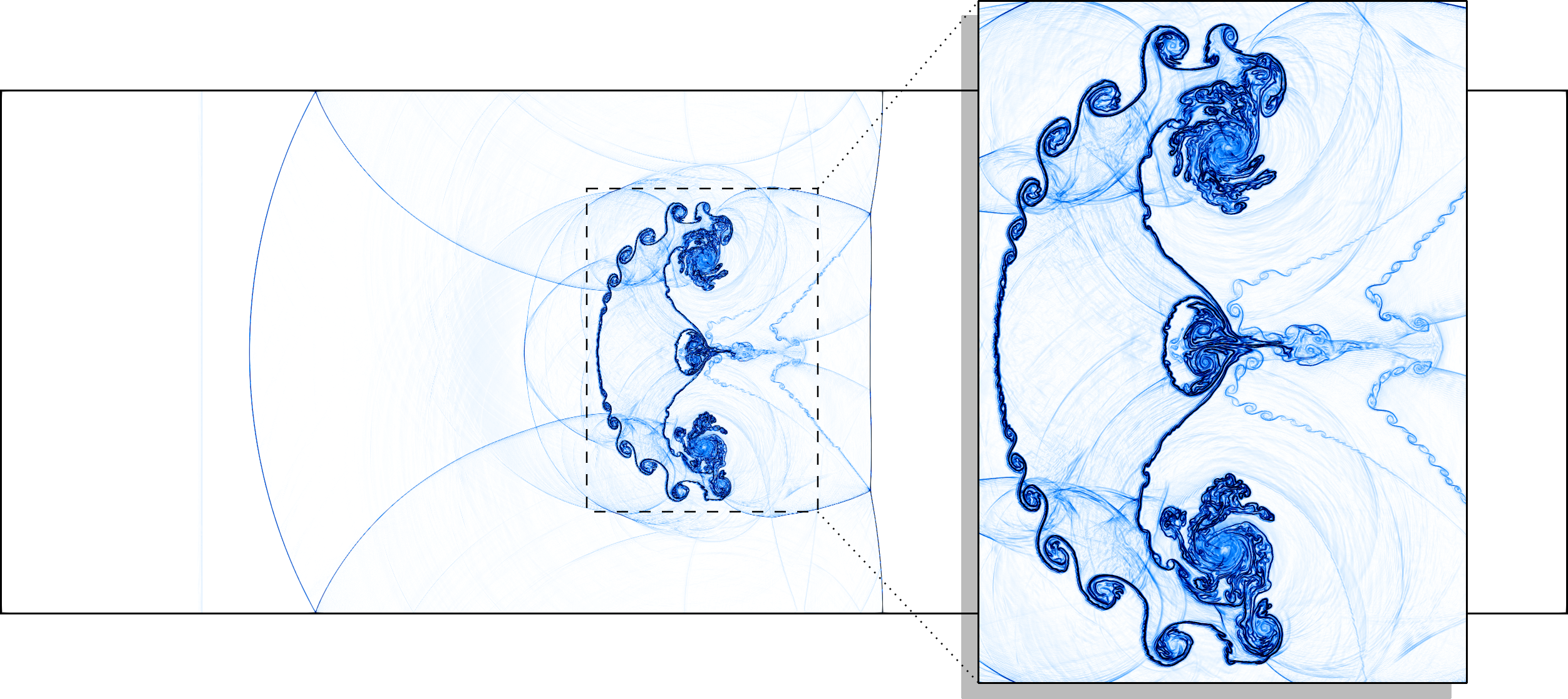}\par
  \caption{Schlieren plots for the shock-bubble interaction benchmark for
$t = \SI{70}{\mu\s}$.}
  \label{fig2:wang_li_schlieren_shock_bubble}
\end{figure}
\begin{figure}[ht]
  \centering
  \includegraphics[scale=0.12]{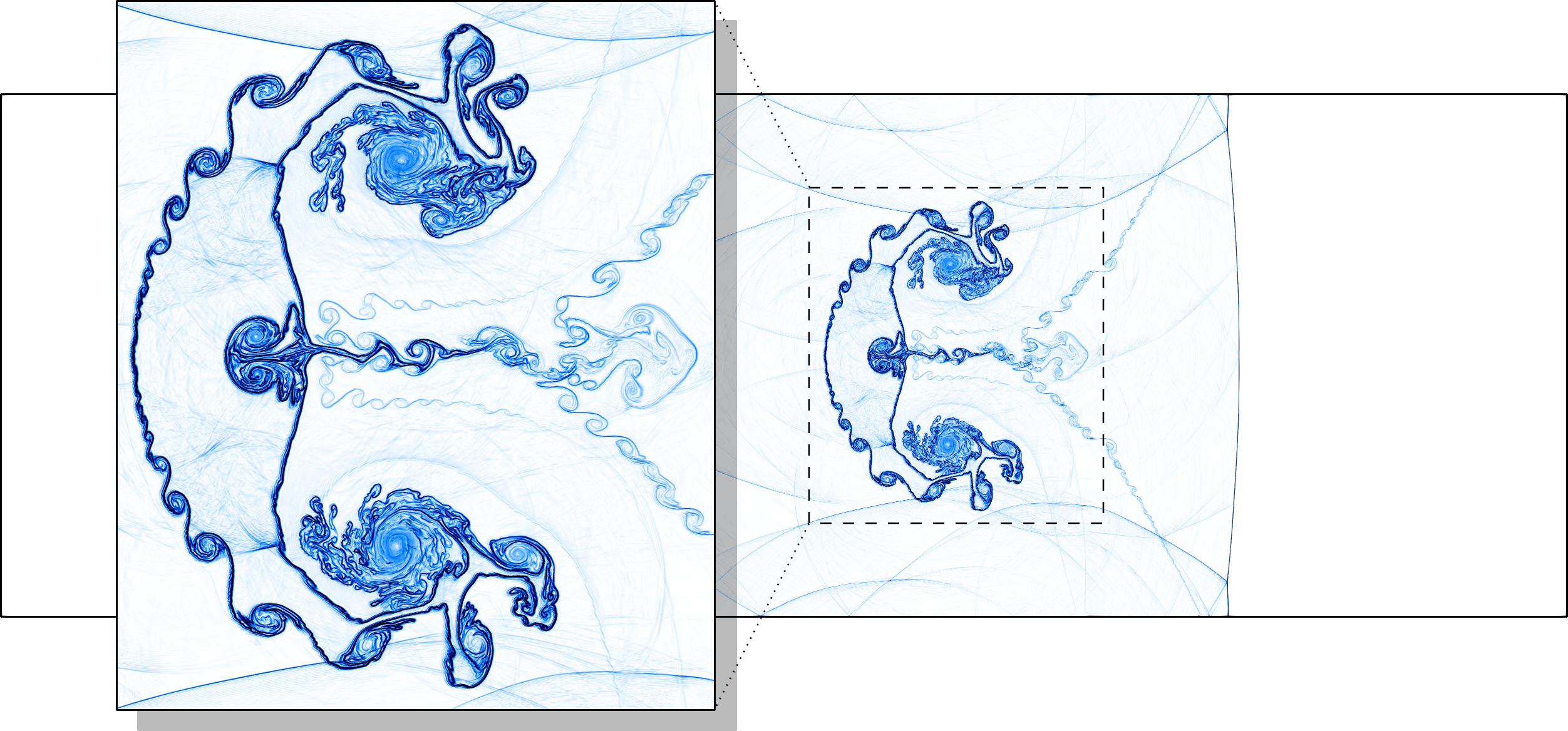}
  \caption{Schlieren plots for the shock-bubble interaction benchmark for
$t = \SI{100}{\mu\s}$ (bottom).}
  \label{fig3:wang_li_schlieren_shock_bubble}
\end{figure}


\section{Conclusion}
\label{sec:conclusion}

We have developed an invariant-domain preserving second-order accurate
method for the Euler equations with arbitrary or tabulated equation of
states. The work presented is the continuation and extension of
\cite{Guermond_Nazarov_Popov_Tomas_SISC_2019} and~\cite{CGP_2022}. We
proposed a surrogate entropy functional that increases across shocks in the
associated 1D Riemann problem to work around the lack of a general
mathematical entropy valid for arbitrary equations of state. A convex
limiting procedure was performed on this surrogate entropy functional to
enforce a local minimum principle. This in turn implies a positive local
lower bound on the internal energy. Numerical evidence of higher order
accuracy was demonstrated with convergence tests and several computational
benchmarks.


\appendix
\section{Isentropic vortex with van der Waals equation of state}
\label{appendix:isen_vortex_vdw}
We present a derivation of the isentropic vortex solution with the van der
Waals equation of state and give some necessary conditions for the
existence of this solution.
\begin{theorem}
  Consider the van der Waals equation of state~\eqref{vdw_eqn} with
  $a > 0$, $b \eqq 0$, and $\gamma \eqq \frac32$ or $\gamma \eqq
  2$. Let $\bx^0 \in \Real^2$, $\beta > 0$, $r_0 > 0$. Let
  $\rho_\infty>0$, $\bv_\infty\in\Real^2$, $p_\infty>0$ and assume
  that
  \begin{equation}
      p_\infty > \tfrac13 a \rho_\infty^2,\qquad
      a \rho_\infty + \frac{3 p_\infty}{\rho_\infty} >
      \frac{\beta^2 e^1}{8 r_0^2 \pi^2}
  \end{equation}
 if $\gamma = \tfrac32$.
  The following density, velocity, and pressure fields solve the
  compressible Euler equations~\eqref{mass}--\eqref{total_energy} with the
  van der Waals equation of state:
  \begin{subequations}
    \begin{align}
      \rho(\bx,t) & \eqq
      \begin{cases}
        \label{isentropic_vdw_density}
        \Big( \tfrac{3C}{4a} - \tfrac{1}{2}\sqrt{\tfrac{9C^2}{4a^2} +
        \tfrac{2}{a}\big( F + \tfrac{1}{2r_0^2} \psi(\overline{\bx})^2
        \big)} \Big)^2, & \; \text{ if } \gamma = \tfrac32,
        \\
        \rho_\infty - \frac{\rho^2_\infty}{4 p_\infty r_0^2}
        \psi(\overline{\bx}), & \; \text{ if } \gamma = 2,
      \end{cases}
      \\
      \bv(\bx,t) & \eqq \bv_\infty + \psi(\overline{\bx}) (-\bar{x}_2,
      \bar{x}_1)\tr,
      \\
      p(\bx,t) & \eqq C \rho(\bx,t)^\gamma - a \rho(\bx,t)^2,
      \label{isentropic_vdw_pressure}
    \end{align}
  \end{subequations}
  with $\psi(\overline{\bx}) \eqq \tfrac{\beta}{2\pi}\exp(\tfrac12(1 -
  \tfrac{1}{r_0^2}\Vert\overline{\bx}\Vert^2_{\ell^2}))$, $(\bar{x}_1,
  \bar{x}_2) = \overline{\bx} \eqq \bx - \bx^0 - \bv_\infty t$, $C =
  (p_\infty + a \rho_\infty^2)/\rho_\infty^{3/2}$, and $F =
  -a\rho_\infty - 3p_\infty/\rho_\infty$.
\end{theorem}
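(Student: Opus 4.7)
The plan is to exploit Galilean invariance by passing to the frame moving with velocity $\bv_\infty$ so that every field becomes steady in the shifted coordinate $\overline{\bx} \eqq \bx - \bx^0 - \bv_\infty t$, and then to decouple the Euler system into (i) conservation of mass, (ii) a radial momentum balance, and (iii) an entropy transport equation that is automatically satisfied because the ansatz is isentropic. Throughout I would use $r \eqq \|\overline{\bx}\|_{\ell^2}$.

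Because each of $\rho$, $\bv$, and $p$ depends on $(\bx,t)$ only through $\overline{\bx}$, one has $\partial_t(\cdot) = -(\bv_\infty\SCAL\nabla)(\cdot)$. Writing $\bv = \bv_\infty + \bu$ with $\bu(\overline{\bx})=\psi(\overline{\bx})(-\bar{x}_2,\bar{x}_1)\tr$, the conservation of mass reduces to $\nabla\SCAL(\rho\bu) = 0$, which follows from the radial symmetry of $\rho\psi$ together with $\bu$ being tangent to circles centered at $\overline{\bx}=0$. The momentum equation can be written in the nonconservative form $\rho(\partial_t\bv + (\bv\SCAL\nabla)\bv) + \nabla p = \bzero$ (a consequence of the mass conservation already verified); substituting $\bv = \bv_\infty + \bu$ leaves $\rho(\bu\SCAL\nabla)\bu + \nabla p = \bzero$. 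A short computation shows $(\bu\SCAL\nabla)\bu = -\psi(r)^2\,\overline{\bx}$, so the momentum equation becomes $\nabla p = \rho(r)\psi(r)^2\,\overline{\bx}$ and, by radial symmetry, collapses to the scalar ODE $p'(r) = \rho(r)\, r\,\psi(r)^2$.

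For the energy equation, I would observe that with $b = 0$ the van der Waals EOS satisfies $p + a\rho^2 = (\gamma-1)\rho(e + a\rho)$, so $\tilde s \eqq \log\!\big((p + a\rho^2)/\rho^\gamma\big)$ is a legitimate specific entropy, and the ansatz $p = C\rho^\gamma - a\rho^2$ makes $\tilde s \equiv \log C$ constant in space and time. For smooth solutions the energy equation~\eqref{total_energy} is then equivalent to $\partial_t\tilde s + \bv\SCAL\nabla\tilde s = 0$, which holds trivially. It remains only to integrate the momentum ODE. Introducing the enthalpy $h(\rho) \eqq \tfrac{C\gamma}{\gamma-1}\rho^{\gamma-1} - 2a\rho$ so that $h'(\rho)\rho'(r) = p'(r)/\rho(r)$, the ODE becomes $(h\circ\rho)'(r) = r\psi(r)^2$; using $\psi'(r) = -\tfrac{r}{r_0^2}\psi(r)$ one obtains $r\psi(r)^2 = -\tfrac{r_0^2}{2}(\psi^2)'(r)$, and integrating from $r$ to $\infty$ with the far-field condition $\rho(r)\to\rho_\infty$ yields $h(\rho(r)) = h(\rho_\infty) - \tfrac{r_0^2}{2}\psi(r)^2$.

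The closed-form expressions for $\rho$ then come from inverting $h$: for $\gamma = 3/2$, $h$ is quadratic in $\rho^{1/2}$ and the quadratic formula produces the stated expression (with the branch chosen so that $\rho\to\rho_\infty$ at infinity); for $\gamma = 2$, $h$ is affine in $\rho$ and inversion is immediate. The main technical point is not the verification of the equations, which is mechanical, but rather the branch selection and the derivation of the hypotheses on $(p_\infty, \rho_\infty, \beta, r_0)$ guaranteeing that $\rho$ stays real and strictly positive for every $\overline{\bx}$. These conditions amount to requiring that the discriminant of the quadratic remain nonnegative everywhere, and they use the maximum value $\max_r\psi(r)^2 = \beta^2 e/(4\pi^2)$, attained at $r = 0$, which is precisely what produces the constant $\beta^2 e^1/(8 r_0^2 \pi^2)$ appearing in the second stated inequality.
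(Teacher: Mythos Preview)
Your derivation follows essentially the same route as the paper---reduce to the moving frame, check that the tangential field $\bu=\psi(-\bar x_2,\bar x_1)$ leaves mass conservation trivially satisfied, collapse the momentum equation to a scalar radial balance, and integrate. Your use of the enthalpy $h(\rho)=\tfrac{C\gamma}{\gamma-1}\rho^{\gamma-1}-2a\rho$ is a clean repackaging of the paper's direct primitive $\int \rho^{-1}\partial_{x_i}p\,dx_i$; the two computations are equivalent. Your treatment of the energy equation via constancy of the specific entropy $\tilde s=\log\big((p+a\rho^2)/\rho^\gamma\big)$ is also a legitimate substitute for the paper's bare statement of the simplified internal-energy equation.

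Where your account drifts is in the role of the two stated hypotheses. The condition you single out, $-F>\tfrac{1}{2r_0^2}\max\psi^2$, is \emph{not} a discriminant condition: in the quadratic $\rho-\tfrac{3C}{2a}\sqrt{\rho}-\tfrac{1}{2a}(F+\tfrac{1}{2r_0^2}\psi^2)=0$ the discriminant is $(3C/2a)^2+\tfrac{2}{a}(F+\tfrac{1}{2r_0^2}\psi^2)$, and one checks that this equals $\tfrac{1}{a^2\rho_\infty^3}(3p_\infty-a\rho_\infty^2)^2+\tfrac{1}{ar_0^2}\psi^2\ge 0$ unconditionally. What the second hypothesis actually buys is positivity of the \emph{smaller} root, \ie $\tfrac{3C}{4a}>\tfrac12\sqrt{D}$, which is equivalent to $F+\tfrac{1}{2r_0^2}\psi^2<0$. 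More importantly, you do not explain the first hypothesis $p_\infty>\tfrac13 a\rho_\infty^2$ at all. In the paper this condition is introduced to guarantee that the sound speed $c^2=\gamma(p+a\rho^2)/\rho-2a\rho$ is real at the far field (hyperbolicity), and it is simultaneously what forces $\sqrt{\rho_\infty}$ to coincide with the minus branch you selected (the inequality $\sqrt{\rho_\infty}<\tfrac{3C}{4a}$ rewrites exactly as $p_\infty>\tfrac13 a\rho_\infty^2$). The paper then goes further and verifies that $c^2(\rho)=\tfrac32 C\sqrt{\rho}-2a\rho>0$ on the entire range $0<\rho<(3C/4a)^2$, so that hyperbolicity holds everywhere, not just at infinity; this step is absent from your outline and should be added.
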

\begin{proof}
  The derivation of the isentropic vortex begins with the additional
  assumption that the velocity field is divergence free.  That is, $\DIV\bv
  = 0$.  Under this assumption the Euler equations take the following
  simplified form:
  \begin{align}
    \partial_t \rho(\bx,t) + \bv (\bx,t)\SCAL\GRAD\rho
    (\bx,t)
    & = 0,                                      &  & \quad \bx \in \mathbb{R}^2, \, t > 0,                       \\
    \partial_t \bv(\bx,t)+ (\bv(\bx,t)\SCAL\GRAD ) \bv(\bx,t)
    & = -\frac{1}{\rho(\bx,t)} \nabla p(\bx,t), &  & \quad \bx \in \mathbb{R}^2, \, t > 0 \label{isenvortex_eq2} \\
    \partial_t e(\bx,t) + \bv(\bx,t) \SCAL\GRAD e(\bx,t)
    & = 0,                                      &  & \quad \bx \in \mathbb{R}^2, \, t > 0,
  \end{align}
  with $\bx \eqq (x_1, x_2)$, boundary conditions, $(\rho_\infty,
  \bv_\infty {\eqq}(v_{1,\infty}, v_{2,\infty})\tr, p_\infty)$ and yet to
  be determined initial conditions $(\rho_0(\bx), \bv_0(\bx),
  p_0(\bx))$.  To keep things general, we make no assumption on the
  equation of state for $p = p(\rho,e)$.

  We write the solution as a perturbation of the far-field state; \ie we
  define $\bv \eqq \bv_\infty + \delta \bv$ with
  \begin{equation}
    \delta \bv(\bx,t) :=
    \begin{pmatrix}
      \partial_{x_2} \psi(\bx - \bx^0 - \bv_\infty t) \\
      -\partial_{x_1} \psi(\bx - \bx^0 - \bv_\infty t)
    \end{pmatrix},
  \end{equation}
  with the stream function $\psi(\bx) :=
  \frac{\beta}{2\pi}\exp(\tfrac{1}{2}(1 -
  \tfrac{\Vert\bx\Vert_{\ell^2}^2}{r_0^2}))$. Here $\bx^0 := (x_1^0, x_2^0)
  \in \mathbb{R}^2$, $\beta$, and $r_0$ are free parameters.  To further
  simplify notation, define $(\bar{x}_1, \bar{x}_2) = \overline{\bx} := \bx
  - \bx^0 - \bv_\infty t$ and $r^2 := \Vert\overline{\bx}\Vert_{\ell^2}^2$.
  Note the following identities which will be used later on:
  \begin{align}
    \partial_{x_i} \psi(\overline{\bx})
    & =
    -\frac{\bar{x}_i}{r_0^2} \psi(\overline{\bx}),
    \label{isen_gauss_identity1}
    \\
    \partial_{x_ix_j} \psi(\overline{\bx})
    & =
    \frac{1}{r_0^2} \Big( -\delta_{ij} + \frac{\bar{x}_i \bar{x}_j}{r_0^2}
    \Big) \psi(\overline{\bx}), \label{isen_gauss_identity2}
    \\
    \partial_{t x_i} \psi(\overline{\bx})
    & =
    \frac{1}{r_0^2} \Big(v_{i,\infty} - \frac{\bar{x}_i \bv_\infty \cdot
    \overline{\bx}}{r_0^2} \Big) \psi(\overline{\bx}),
    \label{isen_gauss_identity3}
  \end{align}
  where $\delta_{ij}$ is the Kronecker symbol and $i,j \in \{1, 2\}$.

  Using that $\bv = \bv_\infty + \delta \bv$, the left hand side of
  \eqref{isenvortex_eq2} becomes,
  \begin{equation}
    \partial_t \bv + \bv \cdot \nabla \bv =  \partial_t (\delta \bv)
    + (\bv_\infty \cdot \nabla) \delta \bv + (\delta \bv \cdot \nabla) \delta \bv.
  \end{equation}
  From the definition of $\delta \bv$ and the identities
  \eqref{isen_gauss_identity1}, \eqref{isen_gauss_identity2} and
  \eqref{isen_gauss_identity3}, we have,
  \begin{align}
    (\delta \bv \cdot \nabla) \delta \bv & =
    \begin{bmatrix}
      (\partial_{x_2} \psi) (\partial^2_{x_1 x_2} \psi) - (\partial_{x_1}
      \psi) (\partial^2_{x_2} \psi)
      \\
      -(\partial_{x_2} \psi) (\partial^2_{x_1} \psi) + (\partial_{x_1}
      \psi) (\partial^2_{x_1 x_2} \psi)
    \end{bmatrix}
    = - \frac{\overline{\bx}}{r_0^4}\psi(\overline{\bx})^2
    \\
    (\bv_\infty \cdot \nabla) \delta \bv & = \frac{1}{r_0^2}\Big(-
    \begin{bmatrix} v_{2,\infty} \\ -v_{1,\infty} \end{bmatrix}
    + \frac{\bv_\infty \cdot \overline{\bx}}{r_0^2}
    \begin{bmatrix} \bar{x}_2 \\ -\bar{x}_1 \end{bmatrix}\Big) \psi(\overline{\bx})
    \\
    \partial_t (\delta \bv) & = \frac{1}{r_0^2}\Big(
    \begin{bmatrix} v_{2,\infty} \\ -v_{1,\infty} \end{bmatrix}
    - \frac{\bv_\infty \cdot \overline{\bx}}{r_0^2}
    \begin{bmatrix} \bar{x}_2 \\ -\bar{x}_1 \end{bmatrix} \Big)
    \psi(\overline{\bx})
  \end{align}
  Thus equation \eqref{isenvortex_eq2} becomes
 $-\frac{\overline{\bx}}{r_0^4} \psi(\overline{\bx})^2 =
    -\frac{1}{\rho} \nabla p$. This identity is furthermore written as,
  \begin{equation} \label{diff_eq_for_psi} -\frac{1}{2r_0^2}
    \nabla(\psi(\overline{\bx})^2) = \frac{1}{\rho(t,\bx)} \nabla
    p(\rho(t,\bx)).
  \end{equation}

  Up to this point, we have not made any assumption on the equation of
  state. We recover the well known isentropic vortex solution  if we
  assume the pressure is given by the ideal gas law; \ie $p(\rho) =
  C\rho^\gamma$ for the isentropic flow where $C =
  p_\infty/\rho_\infty^\gamma$. We now proceed with the van der Waals
  equation of state. For isentropic flows we have
  \begin{equation} \label{isentropic_vdw_eos} p(\rho) =
    \frac{C\rho^\gamma}{(1 - b\rho)^{\gamma}} - a\rho^2,
  \end{equation}
  where $C$ is some constant.  (Note, we work with an arbitrary $b$ to keep
  things general in the beginning.)  Following the same process as in the
  ideal gas case, we compute the indefinite integral, $\int
  \frac{1}{\rho} \partial_{x_i} p(\rho) \: dx_i$:
  \begin{multline*}
    -\frac{1}{2r_0^2}\int \partial_{x_i} \psi(\overline\bx)^2 \: dx_i =
    \int \frac{1}{\rho} \partial_{x_i} p(\rho) \: dx_i
    = \frac{p(\rho)}{\rho} + \int \frac{p(\rho)}{\rho^2}
    \partial_{x_i} \rho\: dx_i   \\
    = \frac{p(\rho)}{\rho} + \int \Big(\frac{C\rho^{\gamma-2}}{(1
    - b\rho)^{\gamma}} - a\Big) \rho_{x_i} dx_i
    = \frac{p(\rho)}{\rho} + \int \frac{\partial}{\partial x_i} \Big[
    \frac{C}{\gamma-1} \Big( \frac{\rho}{1 - b\rho}
    \Big)^{\gamma-1} - a\rho \Big] \: dx_i
    \\
    = \frac{C \rho^{\gamma-1} (\gamma - b\rho)}{(\gamma-1)(1 -
    b\rho)^\gamma} - 2 a \rho + F.
  \end{multline*}
  Hence, $\rho(\overline\bx)$ can be found by solving the equation,
  \begin{equation}\label{nonlin_eq_for_finding_rho}
    -\frac{1}{2r_0^2} \psi(\overline{\bx})^2= \frac{C \rho^{\gamma-1}
    (\gamma - b\rho)}{(\gamma-1)(1 - b\rho)^\gamma} - 2 a \rho + F.
  \end{equation}
  We have two immediate cases for solutions that can be found explicitly.

  \textbf{Case 1: $\gamma = 3/2$ and $b=0$:}
  In this case, \eqref{nonlin_eq_for_finding_rho} becomes a quadratic
  equation for $\sqrt{\rho}$,
  \begin{equation} \label{isen_vort_nonlinear_eq} \rho -
    \frac{3C}{2a} \sqrt{\rho} - \frac{1}{2a} \Big(F +
    \frac{1}{2r_0^2} \psi(\overline{\bx})^2\Big) = 0.
  \end{equation}
  The constants $C$ and $F$ are
  determined by applying the far field condition to
  \eqref{isentropic_vdw_eos} and \eqref{isen_vort_nonlinear_eq}:
  \begin{equation} C = \frac{p_\infty +
    a\rho_\infty^2}{\rho^{3/2}_\infty} \quad \text{and} \quad F = -
    a \rho_\infty - \frac{3 p_\infty}{\rho_\infty}.
  \end{equation}
  However, care must be taken in the choice of $p_\infty$ and
  $\rho_\infty$ so that the sound speed remains real. Recall that the
  sound speed for the van der Waals EOS is
  $ c(\rho,p) = \sqrt{\gamma \frac{p + a \rho^2}{\rho(1 - b\rho)} -
    2a\rho}$.
  The hypothesis $p_\infty > \frac{1}{3} a \rho_\infty^2$ guarantees
  that $c(\rho_\infty, p_\infty)^2 > 0$.

  The physical root for equation \eqref{isen_vort_nonlinear_eq} is
  $\sqrt{\rho} = \tfrac{3C}{4a} - \tfrac{1}{2}\sqrt{\tfrac{9C^2}{4a^2}
    + \tfrac{2}{a}\big(F + \tfrac{1}{2r_0^2} \psi(\overline{\bx})^2
    \big)}$.
  Furthermore, for the root to be real we require that $-F >
  \frac{1}{2r_0^2}\psi(\overline{\bx})^2$ for all $\overline{\bx} \in
  \mathbb{R}^2$.  In particular,
  \begin{equation} a\rho_\infty +
    \frac{3p_\infty}{\rho_\infty} > \frac{\beta^2 e^1}{8r_0^2 \pi^2}.
  \end{equation}

  Lastly, we must justify that the system remains hyperbolic; that is, the
  sound speed is real for all $(\bx, t) \in \Real^2 \CROSS [0,\infty)$.
  Since the flow is isentropic, the sound speed for the van der Waals EOS
  (with $\gamma = 3/2$ and $b = 0$) is,
 $f(\rho) := c(p(\rho),\rho)^2 = \frac32 C \sqrt{\rho} - 2a\rho$.
  Note that $\lim_{\rho \to 0^+} f(\rho) = 0$, $f'(\rho) =
  \frac{3C}{4\sqrt{\rho}} - 2a$, and $\lim_{\rho \to 0^+} f'(\rho)
  = \infty$.  Therefore, $f(\rho)$ has a maximum at $\rho = \big(
  \frac{3C}{8a} \big)^2$ and hence $f(\rho) > 0$ for $\rho \in (0,
  \big( \frac{3C}{8a} \big)^2)$.  From the definition of $\rho$,
  \eqref{isentropic_vdw_density}, we see that $0 < \rho <
  \big(\frac{3C}{4a}\big)^2$.  Thus the sound speed is always real.

  \textbf{Case 2: $\gamma = 2$ and $b = 0$:}
  For these choices of parameters, \eqref{nonlin_eq_for_finding_rho}
  becomes,
  \begin{equation}\label{gamma_2_linear_eq}
      2(C - a)\rho + F + \frac{1}{2r_0^2} \psi(\overline{\bx}) = 0.
  \end{equation}
  Using the far field boundary conditions for
  \eqref{isentropic_vdw_pressure} and \eqref{gamma_2_linear_eq} we find
  that $C = \frac{p_\infty}{\rho^2_\infty} + a$ and $F = -2
  p_\infty/\rho_\infty$, respectively. Solving for $\rho$ in
  \eqref{gamma_2_linear_eq} we have,
  \begin{equation}
      \rho = \rho_\infty - \frac{\rho^2_\infty}{4 p_\infty r_0^2}
      \psi(\overline{\bx}).
  \end{equation}
  Note the sound speed is $c(p(\rho),\rho)^2 = 2(C-a)\rho
  = \frac{2p_\infty}{\rho_\infty} \rho > 0$.
\end{proof}

\bibliographystyle{abbrvnat}
\bibliography{ref_generic_eos}
\end{document}